\definecolor{black}{rgb}{0.0, 0.0, 0.0}
\definecolor{red}{rgb}{1.0, 0.5, 0.5}
\title[   ]{Existence and stability of planar shocks of viscous scalar conservation laws with space-periodic flux}
\author[Dalibard]{Anne-Laure Dalibard}
\address[Anne-Laure Dalibard]{Sorbonne Universit\'es, UPMC Univ Paris 06, CNRS, UMR 7598, Laboratoire Jacques-Louis Lions, 4, place Jussieu 75005, Paris, France.}
\email{dalibard@ljll.math.upmc.fr}
\author[Kang]{Moon-Jin Kang}
\address[Moon-Jin Kang]{\newline Department of Mathematics, \newline The University of Texas at Austin, Austin, TX 78712, USA}
\email{moonjinkang@math.utexas.edu}
\newtheorem{lemma}{Lemma}[section]
\newtheorem{proposition}{Proposition}[section]
\newtheorem{remark}{Remark}[section]
\newtheorem{definition}{Definition}[section]
\newcommand{\bbr}{\mathbb R}
\newcommand{\bbz}{\mathbb Z}
\newcommand{\bbt} {\mathbb T}
\newcommand{\na}{\nabla}
\numberwithin{figure}{section}
\newcommand{\beq}{\begin{equation}}
\newcommand{\eeq}{\end{equation}}
\newcommand{\bsp}{\begin{split}}
\newcommand{\esp}{\end{split}}
\newcommand{\dv}{\mathrm{div}}
\newcommand{\sgn}{{\text{\rm sgn}}}
\newcommand{\RR}{{\mathbb R}}
\newcommand{\NN}{{\mathbb N}}
\newcommand{\ZZ}{{\mathbb Z}}
\newcommand{\TT}{{\mathbb T}}
\def\eps{\varepsilon }
\newcommand\adots{\mathinner{\mkern2mu\raise1pt\hbox{.}
\mkern3mu\raise4pt\hbox{.}\mkern1mu\raise7pt\hbox{.}}}
\newtheorem{theo}{Theorem}[section]
\newtheorem{prop}[theo]{Proposition}
\newtheorem{cor}[theo]{Corollary}
\newtheorem{lem}[theo]{Lemma}
\newtheorem{rem}[theo]{Remark}
\newcommand{\be}{\begin{equation}}
\newcommand{\ee}{\end{equation}}
\newcommand{\ba}{\begin{aligned}}
\newcommand{\ea}{\end{aligned}}
\newcommand{\p}{\partial}
\def\charf {\mbox{{\text 1}\kern-.30em {\text l}}}
\setlist[enumerate,1]{leftmargin=0cm}
\setlist[enumerate,1]{itemindent=0cm}
\newcommand{\mean}[1]{\left\langle #1\right\rangle}
\begin{document}
%%%%%%%%%%%%%%%%

\date{\today}

%\subjclass{    } \keywords{}

\thanks{\textbf{Acknowledgment.} M.-J. Kang was supported by the Foundation Sciences Math$\acute{\mbox{e}}$matiques de Paris as a postdoctoral fellowship, and by an AMS-Simons Travel Grant.
}

\begin{abstract}
The goal of this paper is to prove the existence and stability of shocks for viscous scalar conservation laws with space periodic flux, in the multi-dimensional case. Such a result had been proved by the first author in one space dimension, but the extension to a multi-dimensional setting makes the existence proof non-trivial. We construct approximate solutions by restricting the size of the domain and then passing to the limit as the size of the domain goes to infinity. One of the key steps is a ``normalization'' procedure, which ensures that the limit objects obtained by the approximation scheme are indeed shocks. The proofs rely on elliptic PDE theory rather than ODE arguments as in the 1d case.  Once the existence of shocks is proved, their stability follows from classical arguments based on the theory of dynamical systems.

\end{abstract}
\maketitle \centerline{\date}

%\tableofcontents

\section{Introduction and Main results}
In this article, we aim to show the existence and large time stability of multidimensional planar shock fronts of viscous scalar conservation laws with space-periodic flux: 
\begin{align}
\begin{aligned} \label{main}
&\partial_t u + \sum_{i=1}^N\partial_{x_i} A_i(x,u) = \Delta_x u, \quad t>0,~x\in \bbr^N,\\
&u(0,x) = u_0(x)
\end{aligned}
\end{align}
where the flux functions $A_i : \bbt^N \times \bbr \rightarrow \bbr^N$ are assumed to be periodic with respect to the spatial variable $x$.\\

The issues in the case of one dimension $N=1$ have been treated by the first author in \cite{Dalibard-indiana}, and therefore our goal is to tackle these issues in the multidimensional case ($N \geq 2$).\\
{ 
 When the flux $A$ is homogeneous, i.e. when $A$ does not depend on $x$, a planar shock wave is a special solution of \eqref{main} of the form $u(t,x)=U(x\cdot \nu - ct)$, for some $c\in \RR, \nu \in \mathbb S^{N-1}$, $U \in L^\infty(\RR^N)$, and with $\lim_{y\to \pm \infty } U(y)= U_\pm$, for some constants $U_+, U_- \in \RR$. The profile $U$ is easily found thanks to simple ODE theory together with Rankine-Hugoniot condition. But the stability of planar shock fronts is a challenging issues. Stability for a small perturbation of multidimensional planar shocks has been shown by Goodman \cite{Go}, Hoff and Zumbrun \cite{HZ}, and the second author, Vasseur and Wang \cite{K-V-W}. In one-dimensional case, Freist$\ddot{\mbox{u}}$hler and Serre \cite {FS} proved $L^1$-stability for any $L^1$-perturbation. Recently, the second author and Vasseur \cite{KV} have shown contraction for any $L^2$-perturbation.} \\
But when $A$ depends on the space variable,  the constants are no longer stationary solutions of \eqref{main} in general, and thus cannot be end states of planar shocks. Therefore we first introduce a family of periodic stationary solutions of \eqref{main}, which will play the role of constant solutions in the homogeneous case. These solutions were introduced in \cite{Dalibard-jmpa}.

\begin{proposition}[Existence of periodic stationary solutions of \eqref{main}, see \cite{Dalibard-jmpa}]\label{prop-exist} $ $

\noindent Let $A\in W^{1,\infty}_{loc} (\bbt^N\times \bbr)^N$. Assume that there exists $C_0>0$, and $m\in [0, \infty), n \in [0, \frac{N+2}{N-2})$ when $N>2$ such that for all $(x,v)\in  \bbt^N \times \bbr$,
\begin{subequations}
\begin{align}
&|\partial_v A_i(x,v)|\le C_0 (1+|v|^m),\quad 1\le i\le N, \label{growth-1}\\
&|\mbox{div}_x A(x,v)|\le C_0 (1+|v|^n). \label{growth-2}
\end{align}  
\end{subequations}
Assume as well that one of the following three conditions holds:
\begin{align}
\begin{aligned} \label{cons}
&i)~ m=0 \quad\mbox{or} \\
&ii)~0\le n< 1\quad \mbox{or}\\
&iii)~\Big(n<\min(\frac{N+2}{N},2)\quad \mbox{and}\quad\exists p_0~\mbox{s.t.}~ \sum_{i=1}^N\partial_{x_i} A_i(x,p_0)\equiv0\Big).
\end{aligned}
\end{align}
Then for each $p\in\bbr$, there exists a unique periodic solution $v(\cdot, p)\in H^1(\bbt^N)$ of the equation
\beq\label{cell}
- \Delta_x v(x,p) + \mbox{div}_{x} A(x,v(x,p)) =0,\quad <v(\cdot,p)>=p.
\eeq

\end{proposition}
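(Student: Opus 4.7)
My plan is to recast \eqref{cell} as a nonlinear elliptic equation on $\bbt^N$ for a zero-mean unknown, produce a solution by a compactness/degree argument, and establish uniqueness via the associated parabolic semigroup.

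Setting $v(x,p) = p + w(x)$ with $\langle w\rangle_{\bbt^N} = 0$, the cell problem becomes
$$
-\Delta w + \mathrm{div}_x A(x, p+w) = 0, \qquad \langle w\rangle_{\bbt^N} = 0.
$$
I would embed this into the homotopy $-\Delta w_\lambda + \lambda\, \mathrm{div}_x A(x, p+w_\lambda) = 0$, $\lambda \in [0,1]$, trivially solved at $\lambda = 0$ by $w_0 \equiv 0$; Leray--Schauder degree theory then reduces existence at $\lambda = 1$ to a uniform $H^1(\bbt^N)$ a priori estimate. Testing against $w_\lambda$ and integrating by parts yields
$$
\|\nabla w_\lambda\|_{L^2(\bbt^N)}^2 = \lambda \int_{\bbt^N} A(x, p+w_\lambda)\cdot \nabla w_\lambda\, dx,
$$
and the zero-mean condition permits the use of the Poincar\'e--Wirtinger inequality on $w_\lambda$ together with the Sobolev embedding $H^1(\bbt^N) \hookrightarrow L^{2N/(N-2)}$ to close the estimate.

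The three alternatives in \eqref{cons} correspond to three regimes in which this closure can be carried out: (i) when $m=0$, $A$ is globally Lipschitz in $v$ and the right-hand side is at worst linear in $\|w_\lambda\|_{L^2}$; (ii) when $n<1$, rewriting the right-hand side as $-\int w_\lambda \mathrm{div}_x A(x, p+w_\lambda)$ exposes a strictly sublinear nonlinearity; (iii) when $\mathrm{div}_x A(\cdot, p_0)\equiv 0$ for some $p_0$, the substitution $A(x, p+w_\lambda) \mapsto A(x, p+w_\lambda) - A(x, p_0)$ is legitimate (adds a divergence-free flux) and gains an extra $(p+w_\lambda - p_0)$ factor, pushing the admissible Sobolev exponent up to the critical value $\tfrac{N+2}{N}$. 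An $L^\infty$ bound via De Giorgi / Stampacchia truncation on $w_\lambda$ can also be threaded through to make the absorption clean. Once $w \in H^1$ is produced, elliptic bootstrap upgrades it to the required higher regularity.

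For uniqueness, given two solutions $v_1, v_2$ of \eqref{cell} with the same mean $p$, I would exploit the associated parabolic flow $\partial_t u + \mathrm{div}_x A(x, u) = \Delta_x u$ on $\bbt^N$, which conserves the spatial mean and is $L^1$-contractive/order-preserving. A Kru\v{z}kov-type entropy argument applied to $(v_1-v_2)^\pm$, combined with the zero-mean constraint on $v_1-v_2$, forces $v_1 \equiv v_2$.

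\textbf{Main obstacle.} The delicate step is closing the a priori estimate in case (iii) of \eqref{cons}: when $n$ approaches $\tfrac{N+2}{N}$ the nonlinearity is nearly Sobolev-critical, and absorbing it requires both the zero-mean Poincar\'e--Wirtinger inequality and the centering of the flux at the divergence-free level $p_0$. This centering, invisible to the 1d ODE approach of \cite{Dalibard-indiana}, is what makes elliptic PDE machinery indispensable in the multidimensional case.
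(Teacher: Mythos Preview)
The paper does not prove Proposition~\ref{prop-exist}: it is quoted verbatim from \cite{Dalibard-jmpa} and used as background input. There is therefore no in-paper proof to compare your attempt against; any detailed comparison would have to be with \cite{Dalibard-jmpa} itself.

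On its own merits, your outline (Leray--Schauder homotopy for existence, $L^1$-contraction of the parabolic semigroup for uniqueness) is a standard and viable scheme, and the uniqueness half is fine. Two points in the existence half are not right as written. In case~(i), a right-hand side that is ``at worst linear in $\|w_\lambda\|_{L^2}$'' does \emph{not} absorb: Poincar\'e--Wirtinger only gives $\|w_\lambda\|_{L^2}\le C_P\|\nabla w_\lambda\|_{L^2}$ with a constant you do not control, so the inequality $\|\nabla w_\lambda\|_{L^2}^2\le C\|w_\lambda\|_{L^2}\|\nabla w_\lambda\|_{L^2}+C$ yields nothing. The condition $m=0$ is used in \cite{Dalibard-jmpa} to secure an $L^\infty$ bound (cf.\ Proposition~\ref{prop:v-properties}(iv)), after which the $H^1$ estimate is trivial; your parenthetical about De~Giorgi/Stampacchia points in the right direction but the logic needs to be reordered.

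In case~(iii), the mechanism you describe is incorrect. Subtracting the divergence-free field $A(\cdot,p_0)$ and repeating the identity $\int A\cdot\nabla w=-\int(\mathrm{div}_x a)(x,p+w)$ with $a(x,v)=\int_0^v A(x,s)\,ds$ returns exactly the same integrand, since $\mathrm{div}_x A(x,p_0)=0$ already. The ``extra factor $(p+w_\lambda-p_0)$'' you invoke lives in $\partial_v A$, whose growth is governed by $m$, not by $n$; so it does not explain why the threshold in~(iii) involves $n$. The actual role of $p_0$ is that $v\equiv p_0$ is itself a solution of \eqref{cell}, providing an anchor for comparison/continuation and an $L^1$ control that lets Gagliardo--Nirenberg interpolation (between $L^1$ and $H^1$) close precisely when $n<\tfrac{N+2}{N}$. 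Your final sentence conflates this cell-problem analysis with the shock-existence problem of the present paper; they are separate issues.
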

In the above proposition and throughout the article, the brackets $\mean{\cdot}$ denote the average value of a $\bbt^N$-periodic function.

We list below further properties of the functions $v(x, p)$ (see Proposition \ref{prop:v-properties}). We also define the averaged - or homogenized - flux $\bar A$ by
$$
\bar A(p):=\mean{A(\cdot, v(\cdot, p))} \quad \forall p\in \RR.
$$

We are now ready to define stationary (or standing) planar shocks. 
\begin{definition}
  A stationary planar viscous shock of \eqref{main} with periodic end states is a function $\bar U\in  H^1_{loc}(\bbr^N)$ which is a stationary solution of \eqref{main}, periodic in the variables $x_1, \cdots x_{k-1}$, $x_{k+1}, \cdots x_N$ for some $k\in \{1,\cdots, N\}$, and such that there exist $p_+, p_-\in \RR$ with $p_+\neq p_-$ such that 
\beq\label{shock-b}
\lim_{x_k\to\pm\infty} \left( \bar{U} (x) - v(x,p_{\pm})\right)=0\quad\mbox{in}~ L^{\infty}(\bbt^{N-1}),
\eeq
Such a function is called a stationary shock of \eqref{main} with end states $v(\cdot, p_\pm)$, or  a stationary shock of \eqref{main} connecting $v(\cdot, p_-)$ to $v(\cdot, p_+)$.
\label{def:shock}

\end{definition}

\begin{remark}
Notice that because of the periodicity of the flux and of the stationary states, we only consider shocks in the directions $e_1, \cdots, e_N$, (i.e. in the directions of the canonical basis in $\RR^N$), and not in any direction $\nu\in\mathbb S^{N-1}$ as in the homogeneous case. Indeed, if we take an arbitrary direction $\nu$ and look for a shock such that $U(x\cdot \nu, x^\bot) - U_\pm (x) \to 0$ as $x\cdot \nu \to \pm \infty$, where $x^\bot\cdot \nu=0$, then in general the asymptotic states $U_\pm$ are not periodic solutions of \eqref{main}, but quasi-periodic solutions. Therefore a first step would be to study problems of the type
$$
-\Delta v + \dv \tilde A (x, v)=0
$$
where the flux $\tilde A$ is quasi-periodic in its first variable and the function $v$ is sought as periodic. This is expected to be much more difficult than in the periodic case, due to the lack of compactness and to the non-linearity. Such questions go beyond the scope of this paper, and thus we focus on periodic end states only.\\
Moreover, without loss of generality, we focus on the case when $k=1$ in the rest of the paper.
\end{remark}
{
The stationary shocks in Definition \ref{def:shock} can be viewed as a spatial transition front in a space-periodic environment. The spatial transition fronts arising in various (periodic) heterogeneities have also received a lot of attention in the reaction-diffusion community. In particular, the existence of spatial transition waves for one-dimensional space-heterogenous reaction-diffusion equation has been proved by Xin \cite{Xin} and Berestycki and Hamel \cite{BH-1}, and by Nolen and Ryzhik \cite{NR} and Mellet, Raquejoffre and Sire \cite{MRS} for ignition-type equation. These results have been extended by Zlatos \cite{Z} to multidimensional case of the cylindrical domain $\bbr\times\bbt^{N-1}$. We also refer to \cite{BH-2,BH-3} for a generalization of the notion of the transition fronts, whereas non-existence of such waves has been studied by Nadin \cite{Nadin} and Nolen \it{et al.} \cite{NRRZ}. Such transition wave for space-heterogenous reaction-diffusion equation connects two steady states, which are constants, contrary to our case that the stationary shock wave connects two steady states, which are non-constant periodic solutions.
}

Our main result is the following:
\begin{theo}(Existence of standing shocks)\label{thm:shock-ex}
Assume that $A\in W^{1, \infty}_{loc} (\TT^N\times\RR)^N$, and that there exist two periodic solutions $v(\cdot, p_+), v(\cdot, p_-)$ to \eqref{cell} with $p_+\neq p_-$, satisfying the following conditions:
\begin{subequations}
\begin{align}
&\bar A_1(p_-)= \bar A_1(p_+)=:\alpha, \label{RH-1}\\
&\bar A_1(p)< \alpha,~ \forall p\in (p_+,p_-) ~\mbox{if}~ p_+<p_-,\quad\bar A_1(p)>\alpha,~ \forall p\in (p_-,p_+) ~\mbox{if}~ p_-<p_+. \label{RH-2}
\end{align}
\end{subequations}
Then there exists a stationary  shock $\bar V$ with  end states $v(\cdot, p_-)$ and $v(\cdot, p_+)$.

\end{theo}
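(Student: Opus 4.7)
The plan is to construct $\bar V$ as a limit of approximate solutions on truncated cylinders $\Omega_L:=(-L,L)\times \bbt^{N-1}$, followed by a re-centering in $x_1$ to prevent the transition from escaping to infinity. For each $L$, I would look for $\bar V_L$, periodic in $x'=(x_2,\dots,x_N)$, solving the stationary equation
\[
-\Delta_x \bar V_L+\sum_{i=1}^N \partial_{x_i}A_i(x,\bar V_L)=0 \quad \text{in } \Omega_L,
\]
with Dirichlet data $\bar V_L|_{x_1=\pm L}=v(\cdot,p_\pm)$. Existence would follow from a Leray--Schauder fixed point applied to the map $u\mapsto w$ with $-\Delta_x w=-\sum_i \partial_{x_i}A_i(x,u)$, compactness being provided by the growth hypotheses \eqref{growth-1}--\eqref{growth-2} and \eqref{cons}. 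A comparison argument using $v(\cdot,p_-)$ and $v(\cdot,p_+)$ as barriers (they are themselves stationary solutions of \eqref{main}, and by the monotonicity of $p\mapsto v(\cdot,p)$ they sandwich the solution) provides an $L^\infty$ bound uniform in $L$, after which elliptic regularity yields uniform $H^s_{loc}$ estimates.

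A key preliminary observation is a one-dimensional conservation law for $\bar V_L$. Integrating the equation over $x'\in\bbt^{N-1}$ and exploiting periodicity in that variable kills all terms $\partial_{x_i}(\cdot)$ with $i\geq 2$, and one finds that
\[
\Phi_L(x_1):=\int_{\bbt^{N-1}}\bigl[A_1(x_1,x',\bar V_L)-\partial_{x_1}\bar V_L\bigr]\,dx'
\]
is independent of $x_1$. Averaging a further time over one period in $x_1$ and using the boundary data (together with the fact that $\partial_{x_1}v(\cdot,p)$ integrates to zero on $\bbt^N$) identifies $\Phi_L$ with the homogenized flux, and the Rankine--Hugoniot identity \eqref{RH-1} pins $\Phi_L=\alpha$ regardless of the end state. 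This identity is the crucial tool for selecting the correct asymptotic states later.

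The principal difficulty is that as $L\to\infty$ the transition layer of $\bar V_L$ can drift to $\pm\infty$, yielding a trivial limit. To address this, I would introduce the slice average $m_L(x_1):=\mean{\bar V_L(x_1,\cdot)}_{\bbt^{N-1}}$, and using continuity in $x_1$ together with the barrier bounds (which ensure $m_L$ takes values near $p_\pm$ near the two ends), select $\tau_L\in(-L,L)$ such that $m_L(\tau_L)$ equals a prescribed intermediate value (for instance near $p_*=(p_-+p_+)/2$, possibly after a supplementary short-scale average in $x_1$ to avoid the cell-scale oscillations of $v(\cdot,p)$). The re-centered functions $\tilde V_L(x_1,x'):=\bar V_L(x_1+\tau_L,x')$ satisfy the same equation on an expanding cylinder, inherit the uniform $L^\infty\cap H^1_{loc}$ estimates, and hence subconverge in $L^2_{loc}$ to a stationary solution $\bar V$ of \eqref{main} on $\bbr\times \bbt^{N-1}$ which is periodic in $x'$ and meets the normalization constraint at $x_1=0$; in particular $\bar V$ cannot coincide with either $v(\cdot,p_-)$ or $v(\cdot,p_+)$.

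It remains to identify $\bar V(x_1,\cdot)$ as $x_1\to\pm\infty$, which I expect to be the hardest step. Along any sequence $x_1^{(k)}\to+\infty$, compactness yields that translates $\bar V(x_1^{(k)}+\cdot,\cdot)$ subconverge in $L^2_{loc}$ to a stationary solution $V_\infty$ of the full equation on $\bbr\times \bbt^{N-1}$. A relative-entropy or squared-distance argument monitoring $\int_{\bbt^{N-1}}(\bar V-v(\cdot,p))^2\,dx'$ against the conserved flux $\Phi$, combined with the integrability of $\partial_{x_1}\bar V$ on tails, should force $V_\infty$ to belong to the periodic family $\{v(\cdot,p)\}_{p\in\RR}$. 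The conservation identity of the second step then constrains $\bar A_1(p)=\alpha$, leaving only $p_\pm$ by \eqref{RH-1}. Finally the strict sign condition \eqref{RH-2}, combined with the monotonicity of $p\mapsto v(\cdot,p)$, rules out the ``wrong'' end state at each side and selects $V_\infty=v(\cdot,p_+)$ as $x_1\to+\infty$ (and symmetrically at $-\infty$). This identification, where the geometric entropy-type hypothesis \eqref{RH-2} plays a decisive role, is the main obstacle of the proof.
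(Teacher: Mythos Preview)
Your overall architecture---approximate problems on truncated cylinders, barrier bounds between $v(\cdot,p_+)$ and $v(\cdot,p_-)$, a one-dimensional flux identity, a re-centering step, and then identification of the end states---matches the paper's. But there are two genuine gaps.

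First, a small but real error: you re-center by an arbitrary $\tau_L\in\RR$ and then assert that $\tilde V_L(x)=\bar V_L(x+\tau_L e_1)$ ``satisfies the same equation''. It does not, because the flux $A(x,\cdot)$ is only $\ZZ^N$-periodic in $x$; under a non-integer shift the equation changes. The paper handles this by taking $k_R=\lfloor x_R\rfloor\in\ZZ$. Relatedly, your claim that the boundary data pin $\Phi_L=\alpha$ at the approximate level is not justified: at $x_1=\pm L$ you know $\bar V_L=v(\cdot,p_\pm)$, but $\partial_{x_1}\bar V_L$ need not equal $\partial_{x_1}v(\cdot,p_\pm)$ there. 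The paper only obtains $\alpha_R\ge\alpha$ from the sign of $\partial_{x_1}(v(\cdot,p_-)-\bar U_R)$ at the boundary, and $\bar\alpha=\alpha$ is only established \emph{after} passing to the limit, by invoking the Oleinik condition \eqref{RH-2}.

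Second, and more importantly, your identification of the end states is where the argument breaks down. You propose that subsequential limits of translates are periodic solutions via ``a relative-entropy or squared-distance argument'' together with ``integrability of $\partial_{x_1}\bar V$ on tails''---but neither ingredient is supplied, and in fact there is no a priori reason for $\partial_{x_1}\bar V$ to be integrable. Even granting subsequential convergence to some $v(\cdot,p)$, you still need convergence of the \emph{full} function $\bar V(x_1,\cdot)$ as $x_1\to\pm\infty$, not just along subsequences. The paper circumvents all of this with a single key property you are missing: the \emph{discrete monotonicity} $\bar U_R(x+e_1)<\bar U_R(x)$, proved at the approximate level via a Harnack/Kato-type argument (the elliptic inequality for $(\bar U_R(\cdot+e_1)-\bar U_R)_-$ is shown to be an equality, and then Harnack forces a strict sign). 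This passes to the limit, so $k\mapsto \bar V(\cdot+ke_1)$ is monotone and bounded, hence convergent in $L^\infty_{loc}$ to periodic solutions $v(\cdot,\bar p_\pm)$; the flux identity then gives $\bar A_1(\bar p_\pm)=\bar\alpha\ge\alpha$, and \eqref{RH-2} together with the normalization forces $\bar p_\pm=p_\pm$. That monotonicity step is the decisive idea, and without it your Step~3 does not close.
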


%$\bullet$ {\bf Notation :} Throughout the paper, we use the following notations:
%\[
%\Omega : = \bbr\times\bbt^{N-1}.
%\]

%and there exists a periodic function $\phi \in L^1(\bbt)$ and $\psi \in L^1(\bbt)$ such that
%\begin{align}
%\begin{aligned}\label{Lax-1}
%&\partial_v A_1(x,v(x,p_-))> \phi(x_1),\quad\mbox{for all}~ x\in\bbt^{N},\\
%&\int_{\bbt}\phi dx_1 >0,
%\end{aligned}
%\end{align}
%and 
%\begin{align}
%\begin{aligned}\label{Lax-2}
%&\partial_v A_1(x,v(x,p_+))< \psi (x_1),\quad\mbox{for all}~ x\in\bbt^{N},\\
%&\int_{\bbt}\psi dx_1 <0,
%\end{aligned}
%\end{align} 
%where $v(\cdot,p_\pm)$ are the periodic solutions to \eqref{cell}.  

\begin{remark}\label{remark-flux}
The first assumption \eqref{RH-1} is an analogue of the Rankine-Hugoniot condition for standing shock waves of homogeneous conservation laws. The second assumption \eqref{RH-2} is the analogue of the Oleinik condition. It is proved in section \ref{sec:properties} that the Rankine-Hugoniot condition is in fact a necessary condition for the existence of a shock wave.
\end{remark}

Theorem \ref{thm:shock-ex} is proved by passing to the limit in a sequence of approximate problems. In these approximate problems, the domain $\bbr\times \bbt^{N-1}$ is replaced by $(-R,R)\times \bbt^{N-1}$ for some $R>0$. Standard tools of elliptic theory (Harnack inequality, maximum principle, comparison principle, regularity estimates) are used to prove that the approximate sequence enjoys several nice properties, such as monotony and $L^\infty$ bounds.

From now on, we only handle the first case of \eqref{RH-2}, i.e., 
\beq\label{p-assume}
p_+<p_-,\quad \bar A_1(p)<\alpha, \quad \forall p\in (p_+, p_-),
\eeq
the argument for the other case is exactly identical.

\begin{theo} (Stability of standing shocks) \label{thm:stability}
Assume the hypotheses of Theorem \ref{thm:shock-ex}, furthermore $A \in W^{3,\infty}_{loc} (\bbt^N \times \bbr)^N$. Let $\bar{U}$ be a stationary shock wave connecting $v(\cdot, p_-)$ to $v(\cdot, p_+)$, and $u_0\in \bar U +L^1(\bbr\times\bbt^{N-1})$ be a initial perturbation such that
\be\label{hyp:u0}
v(x,p_+)\leq u_0(x)\leq v(x,p_-)\quad \text{for a.e. } x\in \bbr\times\bbt^{N-1},
\ee
and $u=u(t,x)$ be the unique entropy solution of \eqref{main} with $u_{|t=0}=u_0$. 

\begin{itemize}

\item Assume that $\int_{\bbr \times \bbt^{N-1}} (u_0-\bar U)=0$. Then
$$
\lim_{t\to \infty} \|u(t)-\bar U\|_{L^1(\bbr\times\bbt^{N-1})}=0.
$$

\item Assume that $A \in  W^{3, \infty}_{loc} (\TT^N\times\RR)^N$,  that $\int_{\bbr \times \bbt^{N-1}} (u_0-\bar U)\neq 0$ and that there exist functions $\phi, \psi \in L^1(\bbt)$ such that
\begin{align}\label{Lax-1}
\begin{aligned}
&\partial_v A_1(x,v(x,p_-))\geq  \phi(x_1),\quad\mbox{for a.e.}~ x\in\bbt^{N},\\
&a_-:=\int_{\bbt}\phi dx_1 >0,
\end{aligned}
\end{align}
and 
\begin{align}\label{Lax-2}
\begin{aligned}
&\partial_v A_1(x,v(x,p_+))\leq  \psi (x_1),\quad\mbox{for a.e.}~ x\in\bbt^{N},\\
&a_+:=\int_{\bbt}\psi dx_1 <0.
\end{aligned}
\end{align} 

Then there exists a stationary shock $\bar V$ connecting $v(\cdot, p_-)$ to $v(\cdot, p_+)$ such that $u_0-\bar V \in L^1(\bbr \times \bbt^{N-1})$ and
\[
\int_{\bbr\times \bbt^{N-1}} (u_0-\bar V)=0\quad\mbox{and}\quad\lim_{t\to\infty} \|u(t)-\bar V\|_{L^1(\bbr\times\bbt^{N-1})}=0.
\]

\end{itemize}
\end{theo}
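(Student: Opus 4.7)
I plan to apply a LaSalle-type invariance principle for the parabolic semigroup $u_0 \mapsto u(t)$ of \eqref{main}. The essential a priori tools are: (i) the comparison principle, which propagates $v(\cdot, p_+) \leq u(t, \cdot) \leq v(\cdot, p_-)$ from \eqref{hyp:u0}; (ii) the $L^1$-contraction of entropy solutions against any stationary solution, yielding $V(t) := \|u(t) - \bar U\|_{L^1}$ non-increasing with some limit $\ell \geq 0$; (iii) mass conservation $\int (u(t) - \bar U) \, dx = \int (u_0 - \bar U) \, dx$ for every $t$; and (iv) uniform-in-$t$ parabolic H\"older regularity coming from the $L^\infty$ bound in (i). From these, for any sequence $t_n \to \infty$ and up to a subsequence, one extracts a limit $u_\infty$ of $u(t_n)$ in $L^1(\bbr \times \bbt^{N-1})$ (with tightness following from the monotonicity of $V$ together with equi-integrability), and $u_\infty$ is a bounded stationary solution of \eqref{main} satisfying $\int (u_\infty - \bar U) = \int (u_0 - \bar U)$ and $\|u_\infty - \bar U\|_{L^1} = \ell$.

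The next step is to identify $u_\infty$ as a stationary shock, i.e.\ to verify the asymptotic behavior \eqref{shock-b}. Since $u_\infty - \bar U \in L^1$, its $L^1$-norm on the unit strip $[x_1 - 1, x_1 + 1] \times \bbt^{N-1}$ tends to $0$ as $|x_1| \to \infty$; uniform $C^\alpha$ elliptic regularity on such strips (with coefficients bounded uniformly by the $L^\infty$ bound on $u_\infty$ and $\bar U$) upgrades this to $L^\infty(\bbt^{N-1})$-decay, which combined with the known end states of $\bar U$ yields $u_\infty - v(\cdot, p_\pm) \to 0$ in $L^\infty(\bbt^{N-1})$ as $x_1 \to \pm\infty$. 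Hence every $u_\infty \in \omega(u_0)$ is a stationary shock connecting $v(\cdot, p_-)$ to $v(\cdot, p_+)$.

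For the first bullet, the zero-mass hypothesis $\int (u_0 - \bar U) = 0$ reduces the convergence to a rigidity statement: any stationary shock sandwiched between $v(\cdot, p_\pm)$ with zero integral relative to $\bar U$ must equal $\bar U$. For the second bullet, the plan is to construct a continuous, monotone one-parameter family $\{\bar V_s\}_{s \in \bbr}$ of stationary shocks connecting $v(\cdot, p_-)$ to $v(\cdot, p_+)$, with $\bar V_0 = \bar U$, $\bar V_s - \bar U \in L^1$, and $s \mapsto \int (\bar V_s - \bar U) \, dx$ a homeomorphism $\bbr \to \bbr$. Such a family can be built by running the approximation scheme of Theorem \ref{thm:shock-ex} with shifted Dirichlet boundary data; the Lax conditions \eqref{Lax-1}--\eqref{Lax-2}, through the signs of the averaged quantities $a_\pm$, provide the correct sign of the principal eigenvalue of the linearization of \eqref{cell} at each end state, hence exponential decay of $\bar V_s - v(\cdot, p_\pm)$ at infinity uniform in $s$ and the integrability of $\bar V_s - \bar U$. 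Choosing $s_0$ so that $\int (u_0 - \bar V_{s_0}) = 0$ and applying the first bullet with $\bar V_{s_0}$ in place of $\bar U$ concludes.

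I expect two main obstacles. First, the rigidity step in the first bullet is delicate, since the difference $w := u_\infty - \bar U$ solves a linear elliptic equation $-\Delta w + \dv_x(B(x) w) = 0$ with $B \in L^\infty$ whose zeroth-order term $\dv_x B$ has no definite sign; upgrading the information $\int w = 0$ plus $w \to 0$ at $|x_1| = \infty$ to $w \equiv 0$ requires a careful maximum-principle or Liouville-type argument exploiting the specific structure of the nonlinearity (and possibly the monotonicity properties of the family built in the second bullet). Second, the construction and parametrization of $\{\bar V_s\}$ in the second bullet relies on a spectral analysis of the linearization of \eqref{cell} on the strip $\bbr \times \bbt^{N-1}$, with the averaged quantities $a_\pm$ playing the role of the principal eigenvalue governing exponential tails, and mirroring the role of the transversality conditions at the equilibria of the shock ODE in the one-dimensional analysis of \cite{Dalibard-indiana}.
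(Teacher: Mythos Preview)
Your outline follows the same LaSalle--invariance strategy as the paper, but there is a genuine gap in the first bullet: you assert that any subsequential limit $u_\infty$ of $u(t_n)$ is a \emph{stationary} solution of \eqref{main}. This does not follow from the tools you list. Invariance of the $\omega$-limit set under the semigroup means only that $S_t u_\infty$ stays in $\omega(u_0)$ and that the Lyapunov functional $\|\cdot - \bar U\|_{L^1}$ is constant along this orbit; it does not force $S_t u_\infty = u_\infty$. Once stationarity fails, your subsequent elliptic arguments (the $C^\alpha$ upgrade to verify \eqref{shock-b}, and the Liouville-type rigidity) have no equation to work with.

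The paper's proof never asserts stationarity. For $W_0 \in \omega(u_0)$ it studies the full parabolic evolution $W(t) = S_t W_0$. Constancy of $t \mapsto \|W(t) - \bar U\|_{L^1}$ forces the Kato inequality for $|W - \bar U|$ to be an equality (this is Lemma~\ref{lem-B.1}: no diffusive mass is lost on $\{W = \bar U\}$), so $|W - \bar U|$ solves the linear \emph{parabolic} equation $\partial_t |W - \bar U| + \dv(b\,|W - \bar U|) - \Delta |W - \bar U| = 0$. The parabolic Harnack inequality then yields $\sup_K |W_0 - \bar U| \leq C_K \inf_K |W(1) - \bar U|$, and since $W(1) - \bar U \in L^1_0 \cap H^1_{loc}$ must vanish somewhere, $W_0 \equiv \bar U$. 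Ironically, the rigidity step you flag as delicate would be trivial \emph{if} your stationarity claim held: by Lemma~\ref{lem-L1} any two shocks are ordered, so a shock $u_\infty$ with $\int(u_\infty - \bar U) = 0$ equals $\bar U$. The difficulty is upstream.

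A second, smaller gap: your compactness step (``tightness following from the monotonicity of $V$'') is too vague. From $v(\cdot, p_+) \leq u(t) \leq v(\cdot, p_-)$ alone you cannot control the tails of $u(t) - \bar U$, since $v(\cdot, p_\pm) - \bar U \notin L^1$. The paper first reduces, via Lemma~\ref{lem-relaxed} and an $\varepsilon/3$ argument, to initial data sandwiched between two \emph{shocks} $U_+ \leq u_0 \leq U_-$; then $U_\pm - \bar U \in L^1$ by Lemma~\ref{lem-L1}, and equi-integrability of $(u(t) - \bar U)_{t \geq 0}$ is immediate.

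For the second bullet your plan is correct in spirit and matches Proposition~\ref{prop:shock-mass} in the paper, though the construction there is different: rather than shifting boundary data, it solves approximate problems on $\Omega_R$ for $W = \bar V - \bar U$ with an integral constraint $\int W = q$, using Krein--Rutman and the Fredholm alternative, and passes to the limit. The Lax conditions \eqref{Lax-1}--\eqref{Lax-2} enter exactly as you anticipate, to furnish uniform exponential tails and hence $L^1$-compactness of the approximants.
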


\begin{remark}\label{remark-Lax}
$\bullet$ The assumptions \eqref{Lax-1} and \eqref{Lax-2} are the analogue of the Lax conditions for standing shock waves of homogeneous conservation laws. They are used in the present context to obtain a rate of convergence  of stationary shocks towards their end states $v(\cdot,p_\pm)$. This rate of convergence yields some $L^1$ compactness for an approximate problem (see \eqref{eq:approx-p}). We refer to the proofs of Lemma \ref{lem:p} and Proposition \ref{prop:shock-mass} below for details.

$\bullet$ The proof of Theorem \ref{thm:stability} uses classical arguments, relying on tools from dynamical system theory. The main difficulty lies in the second part of Theorem \ref{thm:stability}, which requires, for any real number $q$ and any shock $\bar U$, to find a shock $\bar V$ with the same end states as $\bar U$ and such that $\int (\bar V - \bar U)=q$. This fact is almost obvious in the homogeneous case, since any spatial translate of a shock is a shock. This statement is still rather easy to prove in the 1d case, since a whole family of shocks depending continuously on a parameter is constructed. In the present case, Theorem \ref{thm:shock-ex} only gives the existence of a single shock, and therefore the existence of shocks satisfying the above statement for any $q\in \bbr$ is far from trivial, and is proved in Proposition \ref{prop:shock-mass}.

$\bullet$ Assumption \eqref{hyp:u0} is a classical assumption within the framework of shock stability for conservation laws (see \cite{Serre} and the discussion on initial data within the interval $[u_+, u_-]$ or outside that interval). In order to remove it, we would typically need to prove the stability of the periodic  solutions $v(\cdot, p_\pm)$ under zero-mass perturbation in the space $L^1(\bbr \times \bbt^{N-1})$. However, to our knowledge, the stability of the functions $v(\cdot, p_\pm)$ is known in $L^1(\bbr^N)$ and in $L^1(\bbt^N)$ (see respectively \cite{Dalibard-jems} and \cite{Dalibard-indiana}), but not in $L^1(\bbr \times \bbt^{N-1})$. Furthermore, the proofs of stability in the whole space $\bbr^N$ and in the torus $\bbt^N$ rely on very different arguments, since in the whole space, dispersive effects take place. It is possible that a hybrid proof could be worked out in spaces of the form $\bbr^k\times \bbt^l$ with $k+l=N$, but such a question goes beyond the scope of this paper and thus we choose to leave it open.
\end{remark}

We now provide some examples of fluxes satisfying assumptions \eqref{RH-1}-\eqref{RH-2}, and \eqref{Lax-1}-\eqref{Lax-2}.
 Let $\Phi: \bbt^N\to \bbr^N$ be a divergence-free vector field, $f\in \mathcal C^1(\bbr,\bbr)$, and let $A(x,v):=\Phi(x)f(v)$. Then   for any constant $p\in \bbr$, $v(\cdot,p):=p$ is a solution to the elliptic equation \eqref{cell} with $<v(\cdot, p)>=p$. As a consequence,
 \[
 \bar{A}_1(p)=\int_{\bbt^{N}}A_1(x,p)dx=f(p)\mean{\Phi_1}.
 \]
 Thus \eqref{RH-1} holds if and only if $f(p_+)=f(p_-)$, and \eqref{RH-2} holds if and only if $f(p)-f(p_\pm)$ has the same (strict) sign as $\mean{\Phi_1}(p_+-p_-)$ for $p\in (p_+,p_-)$. For instance, if $\mean{\Phi_1}>0$ and $f(p)=p^2$, any couple $p_-=-p_+>0$ works.\\
 Moreover,
 $$
 \p_v A_1(x,v(x,p))= \Phi_1(x) f'(p),
 $$
 and therefore \eqref{Lax-1}-\eqref{Lax-2} are satisfied for instance if there exists $\alpha>0$ such that $\Phi_1(x) \geq \alpha$ for all $x$, and if $f$ is strongly convex and such that $f(p_+)=f(p_-)$, with $p_+<p_-$.\\

Let us now introduce some notation that will be used throughout the paper. We will often denote the spatial domain by 
$$
\Omega:=\bbr\times \bbt^{N-1}.
$$
In a similar way, we define, for $R>0$,
$$
\Omega_R:= (-R,R)\times \bbt^{N-1}.
$$
We introduce the space $L^1_0(\bbr\times \bbt^{N-1})$ of integrable functions with zero mass
$$
L^1_0(\bbr\times \bbt^{N-1}):=\left\{f\in L^1(\bbr\times \bbt^{N-1}),\ \int_{\bbr\times \bbt^{N-1}} f=0\right\}.
$$
For any integer $k\in \bbz$, and any function $f\in L^1_{loc}(\Omega)$, we define
$$
\tau_k f(x):= f(x+ke_1),\quad \forall x\in \bbr\times \bbt^{N-1}.
$$

{Let us stress that the main difficulty in this article lies in proving the existence of shock waves. Indeed, shock stability follows from classical arguments in \cite{Dalibard-indiana} relying on dynamical system theory (see \cite{OR}). We recall the arguments in section \ref{sec:stability} for the reader's convenience, but the largest part of the paper is devoted to the existence of shocks.}

The paper is organized as follows: section \ref{sec:shock-ex} is devoted to the proof of Theorem \ref{thm:shock-ex}. In section \ref{sec:properties}, we review some properties of stationary shocks. Eventually, section \ref{sec:stability} is devoted to the proof of Theorem \ref{thm:stability}.

\section{Proof of Theorem \ref{thm:shock-ex}} \label{sec:shock-ex}

In this section, we construct stationary shocks thanks to an approximation scheme on compact sets, and then pass to the limit. The proof makes an extensive use of the maximum principle and of the Rankine-Hugoniot \eqref{RH-1} and Oleinik conditions \eqref{RH-2}. 

Before addressing the proof, we first recall some properties of the functions $v(\cdot, p)$ (see \cite{Dalibard-jmpa}):
\begin{prop}\label{prop-add} Assume that the hypotheses of Proposition \ref{prop-exist} are satisfied.
The family $(v(\cdot,p))_{p\in\bbr}$ satisfies the following properties:\\
(i) Regularity estimate : For all $p\in \bbr$, $v(\cdot, p)\in W^{2,q}(\bbt^N)$ for all $1<q<\infty$ and
\[
\forall R>0, ~\exists C_R>0\quad\mbox{s.t.}\quad \sup_{p\in[-R,R]}\|v(p)\|_{W^{2,q}(\bbt^N)}\le C_R.
\]
(ii) Growth property : if $p>p^{\prime}$, then
\[
v(x,p)<v(x,p^{\prime}),\quad x\in \bbt^N
\]

(iii) p-derivative : For all $p\in \bbr$, $\partial_p v(\cdot, p)\in H^{1}(\bbt^N)$ and
\[
\forall R>0, ~\exists C_R>0\quad\mbox{s.t.}\quad \sup_{p\in[-R,R]}\|\partial_p v(p)\|_{H^{1}(\bbt^N)}\le C_R.
\]
Moreover, 
\beq\label{v-inc}
\partial_p v(x,p) >0\quad\mbox{a.e.}~ (x,p)\in \bbt^N\times \bbr.
\eeq
(iv) Behavior at infinity : if additionally $\partial_v A_i \in L^{\infty}(\bbt^N\times \bbr)$ for $1\le i\le N$, and 
\[
\sup_{v\in\bbr} \|\partial_v A(\cdot, v)\|_{L^{\infty}(\bbt^N)} < \infty,
\]
then 
\beq\label{infinite}
\lim_{p\to-\infty} \sup_{x\in \bbt^N} v(x,p) =-\infty,\quad \lim_{p\to+\infty} \inf_{x\in \bbt^N} v(x,p) =+\infty.
\eeq

\label{prop:v-properties}
\end{prop}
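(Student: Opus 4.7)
My plan is to recover the properties listed in Proposition \ref{prop-add} by combining elliptic regularity theory for the cell equation \eqref{cell} with maximum-principle comparison arguments; these are essentially the arguments of \cite{Dalibard-jmpa}, which I sketch here for completeness.

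For the regularity statement (i), I would start from the $H^1(\bbt^N)$ solution given by Proposition \ref{prop-exist} and first upgrade $v(\cdot,p)$ to $L^\infty$ by a Moser iteration on \eqref{cell}, controlling the nonlinear divergence $\dv_x A(\cdot,v)$ through the subcritical growth \eqref{growth-1}--\eqref{cons}. Once $v(\cdot,p)$ is bounded, a Calder\'on--Zygmund bootstrap on
\[
-\Delta v = -\dv_x A(x, v(x,p))
\]
yields $W^{2,q}$ for every $q<\infty$, with bounds depending only on $\|v(\cdot,p)\|_{L^\infty}$. That $L^\infty$ norm is locally uniform in $p$ by inspection of the iteration, which in turn gives the uniform estimate on $[-R,R]$.

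For the monotonicity (ii) and the existence of $\partial_p v$ in (iii), I would set $w:=v(\cdot,p)-v(\cdot,p')$ for $p>p'$, which satisfies the linear elliptic equation
\[
-\Delta w + \dv_x(\beta(x)\, w) = 0,\qquad \beta(x):=\int_0^1 \partial_v A(x, v(\cdot,p')+s w)\, ds,
\]
together with $\mean{w}=p-p'>0$. The strong maximum principle combined with periodicity forces $w$ to have a constant sign, and the nonzero mean then forces $w$ to be strictly of that sign pointwise. Differentiating \eqref{cell} formally in $p$ produces the same type of linear operator acting on $\partial_p v$ with $\mean{\partial_p v}=1$; its solvability and uniform $H^1$ estimates follow from standard elliptic energy estimates together with the $L^\infty$ bound on $\beta$, and the positivity \eqref{v-inc} is then inherited from (ii) via a difference-quotient argument combined with the strong maximum principle applied to the (nonnegative) quotients.

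For (iv), I would compare $v(\cdot,p)$ with the constants $p\pm C$ for large $C$; under the additional hypothesis $\partial_v A\in L^\infty$, the linearized equation for $v(\cdot,p)-p$ has coefficients bounded uniformly in $p$, which lets me bound $\|v(\cdot,p)-p\|_{L^\infty}$ by a constant independent of $p$ and deduce \eqref{infinite}. The main obstacle in this chain is ensuring that the $L^\infty$ bound in (i) is locally uniform in $p$ when the nonlinearity may be critical in the Sobolev sense: the Moser iteration must be executed carefully so as not to lose control when the exponent $n$ in \eqref{growth-2} approaches $(N+2)/(N-2)$. This is precisely where the structural trichotomy \eqref{cons} enters, and it is the delicate part of the analysis in \cite{Dalibard-jmpa}, to which I refer for the full technical details.
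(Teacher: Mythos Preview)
The paper itself does not prove this proposition; it is quoted from \cite{Dalibard-jmpa}. Your sketch for (i)--(iii) follows the standard arguments from that reference and is correct. Two minor remarks: first, your argument for (ii) correctly yields $v(\cdot,p)>v(\cdot,p')$ for $p>p'$, in agreement with $\partial_p v>0$ in (iii); the reversed inequality printed in the statement of (ii) is a typo. Second, for equations in divergence form $-\Delta w+\dv(\beta w)=0$ one does not have the classical strong maximum principle directly, since the zeroth-order coefficient $\dv\beta$ need not have a sign; the clean route (used repeatedly elsewhere in this paper) is to observe via Kato's inequality that $|w|$ is again a solution on $\bbt^N$, and then apply Harnack's inequality to conclude the dichotomy.

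Your argument for (iv), however, has a genuine gap: the claim that $\|v(\cdot,p)-p\|_{L^\infty}$ is bounded independently of $p$ is false in general. Already for a linear flux $A(x,v)=a(x)+b(x)v$ with $b\in L^\infty(\bbt^N)^N$ one computes $v(x,p)=v_0(x)+p\,w(x)$, where $w>0$ solves $-\Delta w+\dv(bw)=0$ with $\mean{w}=1$; unless $\dv b\equiv 0$ one has $w\not\equiv 1$, so $v(\cdot,p)-p=v_0+p(w-1)$ grows linearly in $|p|$. In your scheme the equation for $u=v-p$ reads $-\Delta u+\dv(c\,u)=-\dv A(\cdot,p)$, and while the coefficient $c$ is indeed uniformly bounded under the extra hypothesis, the source term on the right is not, so no uniform estimate on $u$ follows. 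The correct route to \eqref{infinite} applies Harnack's inequality not to $v-p$ but to $\partial_p v$ (or, equivalently, to the finite difference $v(\cdot,p)-v(\cdot,p_0)$): this positive function solves $-\Delta(\partial_p v)+\dv\big(\partial_v A(x,v)\,\partial_p v\big)=0$ with $\mean{\partial_p v}=1$, and under the hypothesis $\partial_v A\in L^\infty(\bbt^N\times\bbr)$ the coefficient is bounded uniformly in $p$. Harnack then gives a constant $c>0$, independent of $p$, with $\partial_p v(x,p)\geq c$ for all $(x,p)$, and integrating in $p$ yields \eqref{infinite}.
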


\subsection{Construction of approximate solutions}
For any $R>1$, consider the approximate equation:
\beq\label{approximate}\begin{aligned}
- \Delta \bar U_R + \dv A(x, \bar U_R)&=0 \quad \text{in } (-R,R)\times \TT^{N-1},\\
\bar U_R(\pm R, x')&= v(\pm R, x', p_\pm)\quad \forall x'\in \TT^{N-1}.
\end{aligned}
\eeq

For the time being, we assume that the flux $A$ satisfies the assumptions of Proposition \ref{prop-exist} with $m=0$ and $n<1$, i.e. $A$ is uniformly Lipschitz with respect to its second variable, and $\dv_x A$ has sublinear growth. These assumptions will be removed in Remark \ref{rem:growth-A}. 

In this paragraph, we prove the existence and uniqueness of solutions of \eqref{approximate} for any $R>1$. Using the family $v:\bbt^{N}\times\bbr\to \bbr$ constructed in Proposition \ref{prop-exist}, we consider a composite function $V(x):=v(x,f(x_1))$ for some $f\in \mathcal C^\infty(\RR)$ with $f(x_1)=p_-$ if $x_1\leq -1$, $f(x_1)=p_+$ if $x_1\geq 1$. Then we see that \eqref{approximate} is equivalent to
\beq\label{approx-bis}
\begin{aligned}
- \Delta U_R + \dv B(x,  U_R)&=S \quad \text{in } \Omega_R,\\
U_R(\pm R, x')&=0,
\end{aligned}
\eeq
where $U_R:= \bar U_R - V$ and $S:= \Delta V - \dv A(x,V)$, $B(x, r):= A(x, V+r) - A(x, V)$. Notice that since $A$ is  uniformly Lipschitz with respect to $r$, there exists a constant $C$ such that
$$
|B(x, r)|\leq C |r|\quad \forall x\in \Omega,\ \forall r\in \bbr.
$$
Moreover, according to the definition of $S$ and to Proposition \ref{prop-exist}, the support of the function $S$ is included in $[-1,1]\times \bbt^{N-1}$, and $S\in L^2(\Omega)$.

Therefore, it is enough to prove the existence of \eqref{approx-bis}. We want to apply  Schaeffer's fixed point theorem. Let us consider the continuous mapping $L_R: H^1_0(\Omega_R) \to H^1_0(\Omega_R)$ such that $W=L_R(U)$ is the unique solution of the linear elliptic equation:
$$
\begin{aligned}
- \Delta W +\dv B(x,  U) &=S \quad \text{in } \Omega_R,\\
W(\pm R, x')&=0.
\end{aligned}
$$
We use  assumption \eqref{growth-1} with $m=0$ and we obtain
$$
\| \na W \|_{L^2(\Omega_R)}^2 \leq \|S\|_{L^2(\Omega_R)} \|W\|_{L^2(\Omega_R)} + C_0 \|U\|_{L^2(\Omega_R)} \|\na W\|_{L^2(\Omega_R)}.
$$
Using the Poincar$\acute{\mbox{e}}$ inequality and  Young's inequality, we have that
$$
\| \na W \|_{L^2(\Omega_R)}\leq C_R \|S\|_{L^2(\Omega_R)} + C\|U\|_{L^2(\Omega_R)},
$$
for some constant $C_R$ depending on $R$.\\
Since $f$ is smooth, it follows from Proposition \ref{prop-exist} that 
$$
\| \na W \|_{L^2(\Omega_R)}\leq C_R (\|U\|_{L^2(\Omega_R)}+1).
$$
Thus, using the Rellich-Kondrachov theorem, we infer that the mapping $L_R$ is compact. 
Now, there remains to prove that the set
\[
\{ U^\lambda\in H^1_0(\Omega_R) ~|~ U^\lambda=\lambda L_R(U^\lambda),~ \lambda \in [0,1] \}
\]  
is bounded. For any $\lambda \in [0,1]$ and for any solution $U^\lambda$ of $U^\lambda=\lambda L_R(U^\lambda)$, we have
$$
\int_{\Omega_R} | \na U^\lambda|^2 \leq \|S\|_{L^2(\Omega_R)} \|U^\lambda\|_{L^2(\Omega_R)}  + \lambda \left|\int_{\Omega_R} B(x, U^\lambda)\cdot \na U^\lambda\right|.
$$
Let $b:(x,r)\in\RR^{N+1}\mapsto \int_0^r B(x,r')\:dr'$. Then
$$
\int_{\Omega_R} B(x, U^\lambda)\cdot \na U^\lambda=\int_{\Omega_R}\Big(\dv(b(x,U^\lambda)) - (\dv_x b) (x,U^\lambda)\Big) = - \int_{\Omega_R} (\dv_x b) (x,U^\lambda) .
$$ 
Notice that
$$
\dv_x b(x,r)= \int_0^r \Big(\dv A(x,V+r') - \dv A(x,V)\Big)\:dr',
$$
and therefore, using the growth assumption on $A$, there exists a constant $C$ such that for all $r\in \RR$,
$$
|\dv_x b(x,r)|\leq C (1+ |r|^{n+1})\quad \text{with }n<1.
$$
Using once again the Cauchy-Schwartz and the Poincar\'e inequality, we infer that
$$
\| U^\lambda\|_{H^1(\Omega_R)}\leq C_R\quad \forall \lambda\in [0,1].
$$

%We see that
%$$
%\begin{aligned}
%\int_{\Omega_R} | \na U^\lambda|^2 dx &\leq \|S\|_{L^2(\Omega_R)} \|U^\lambda\|_{L^2(\Omega_R)}  +  \left|\int_{\Omega_R} B(x, U^\lambda)\cdot \na U^\lambda dx \right|\\
%&\leq C_R \|\nabla U^\lambda\|_{L^2(\Omega_R)}  +  \left|\int_{\Omega_R} B(x, U^\lambda)\cdot \na U^\lambda dx \right|.
%\end{aligned}
%$$
%If we set $b(x,r):=\int_0^r B(x,r')\:dr'$, then we have
%$$
%\int_{\Omega_R} B(x, U^\lambda)\cdot \na U^\lambda dx=\int_{\Omega_R}\Big(\dv(b(x,U^\lambda)) - (\dv_x b) (x,U^\lambda)\Big) dx= - \int_{\Omega_R} (\dv_x b) (x,U^\lambda) dx,
%$$ 
%where $(\dv_x b)$ denotes the divergence of $x\mapsto b(x,\cdot)$.\\
%Since $V\in L^{\infty}(\bbr\times\bbt^{N-1})$ and
%$$
%(\dv_x b)(x,r)= \int_0^r \Big((\dv_x A)(x,V+r') - (\dv_x A)(x,V) \Big) \:dr',
%$$
%using \eqref{growth-2}, there exists a constant $C>0$ such that for all $r\in \RR$,
%$$
%|\dv_x b(x,r)|\leq C |r| (1+ |r|^{n}).
%$$
%Thus, using the H$\ddot{\mbox{o}}$lder's inequality, we have
%$$
%\begin{aligned}
%\left|\int_{\Omega_R} B(x, U^\lambda)\cdot \na U^\lambda dx \right| &\leq \int_{\Omega_R} (|U^\lambda|+|U^\lambda|^{n+1}) dx\\
%&\le C_R \Big( \|U^\lambda\|_{L^2(\Omega_R)} +\|U^\lambda\|_{L^2(\Omega_R)}^{\frac{n+1}{2}}   \Big).
%\end{aligned}
%$$
%Using the Poincar$\acute{\mbox{e}}$ inequality and the Young's inequality with $n<1$, we have
%$$
%\| \na U^\lambda \|_{L^2(\Omega_R)} \leq C_R.
%$$
According to Schaeffer's fixed point theorem, $L_R$ has a fixed point in $H^1_0(\Omega_R)$, and therefore \eqref{approximate} has a solution in $H^1(\Omega_R)$.

Uniqueness follows for instance from the following argument. Let $\bar U_R, \bar U_R'$ be two solutions of \eqref{approximate}, and let $W:= \bar U_R - \bar U_R'$. Then $W$ solves an elliptic equation of the type
$$
\begin{aligned}
-\Delta W + \dv (a_R W)&=0\quad \text{in } \Omega_R,\\
W(\pm R, x')&=0,
\end{aligned}
$$
where $a_R\in L^\infty(\Omega_R)$ is defined by
\[
a_R(x):=\int_0^1\partial_vA(x,\tau \bar U_R(x) + (1-\tau)\bar U_R'(x)) d\tau.
\]  

On the other hand, using the strong form of the Krein-Rutman Theorem (see Appendix), it can be proved that the equation
$$
\begin{aligned}
-\Delta w + \dv (a_R w)=0\quad \text{in } \Omega_R,\\
- \partial_1 w + a_{R,1} w=0 \text{ on } \partial \Omega_R
\end{aligned}
$$
admits a unique positive solution $w\in \mathcal C(\bar \Omega_R)\cap H^1(\Omega_R)$ such that $\int_{\Omega_R }w=1$. A straightforward computation (see \cite{MMP}) shows that
$$
- \Delta \left(\frac{W^2}{w} \right) + \dv \left(a_R\frac{W^2}{w}  \right) = - 2 w\left| \na \frac{W}{w} \right|^2 \quad \text{in } \Omega_R.
$$
Integrating over $\Omega_R$, we deduce that
$$
\int_{\Omega_R} w\left| \na \frac{W}{w} \right|^2=0,
$$
which implies that $W/w$ is constant, therefore $W\equiv 0$ due to $W=0$ at $x_1=\pm R$.

\subsection{Properties of approximate solutions}
We claim that the approximate solution $\bar U_R$ satisfies the following properties.
\begin{lemma}\label{lem:UR}
For any fixed integer $R>1$, let $\bar U_R$ be the  solution of \eqref{approximate}. Then the following properties holds.
\begin{enumerate}
\item A priori bound in $L^\infty$: for all $x\in \Omega_R$,
$$
v(x,p_+)\leq \bar U_R(x)\leq v(x,p_-).
$$
\item Integration constant: there exists a number $\alpha_R$ such that for all $x_1\in (-R,R)$,
\beq\label{int-cst}
-\frac{d}{dx_1} \int_{\TT^{N-1}} \bar U_R(x_1, x')\:dx' +  \int_{\TT^{N-1}} A_1(x_1,x', \bar U_R(x_1, x'))\:dx' = \alpha_R,
\eeq
and $\alpha \leq \alpha_R \leq C$ for some constant $C$ independent of $R$. (Recall $\alpha:=\bar A_1(p_-)= \bar A_1(p_+)$)

\item Monotony: for all $x\in (-R, R-1)\times \TT^{N-1}$,
$$
\bar U_R(x_1+1, x') < \bar U_R(x_1,x').
$$
\item Uniform local a priori bound:  for any $q\in (1,\infty)$, there exists a constant $C_q$ (independent of $R$) such that
$$
\sup_{k\in \{-R, \cdots R-1\}} \|\bar U_R\|_{W^{2,q} ((k, k+1)\times \TT^{N-1})} \leq C_q.
$$

\end{enumerate}
\label{lem:properties}

\end{lemma}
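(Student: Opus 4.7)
Both properties follow from the comparison principle applied to the semilinear elliptic operator in \eqref{approximate}. For (1), I observe that $v(\cdot,p_\pm)$ are stationary solutions of the same equation on $\Omega_R$, so the differences $W_\pm := v(\cdot,p_\pm) - \bar U_R$ satisfy linear elliptic equations in divergence form of the type $-\Delta W_\pm + \dv(a_\pm W_\pm)=0$, where $a_\pm \in L^\infty(\Omega_R)$ arises from the mean-value expression for $A(x,\cdot)$. The boundary conditions \eqref{approximate} together with Proposition~\ref{prop:v-properties}(ii) give $W_-(\pm R,\cdot)\geq 0 \geq W_+(\pm R,\cdot)$, and the weak maximum principle for such divergence-form operators yields $v(x,p_+)\leq \bar U_R(x)\leq v(x,p_-)$. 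For (3), I exploit the $x_1$-periodicity of $A$: the function $\tilde U(x):=\bar U_R(x+e_1)$ solves the same PDE on $(-R-1,R-1)\times \TT^{N-1}$. On the overlap $(-R,R-1)\times \TT^{N-1}$, at $x_1=-R$ the $L^\infty$ bound (1) applied at $x_1=-R+1$ together with the $x_1$-periodicity of $v(\cdot,p_-)$ yields $\tilde U(-R,\cdot)=\bar U_R(-R+1,\cdot)\leq v(-R+1,\cdot,p_-)=v(-R,\cdot,p_-)=\bar U_R(-R,\cdot)$, and the symmetric computation at $x_1=R-1$ handles that face. The comparison principle extends the inequality to the overlap, and the strong maximum principle makes it strict (otherwise $\bar U_R$ would be $1$-periodic in $x_1$, forcing $v(\cdot,p_-)=v(\cdot,p_+)$ after transporting the boundary data, contradicting $p_-\neq p_+$ via Proposition~\ref{prop:v-properties}(ii)).

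\textbf{Plan for (2).} Integrating \eqref{approximate} over $x'\in \TT^{N-1}$, the $i\geq 2$ components cancel by periodicity, yielding $\partial_{x_1}\bigl[-\partial_{x_1}\int_{\TT^{N-1}}\bar U_R\,dx' + \int_{\TT^{N-1}} A_1(x,\bar U_R)\,dx'\bigr]=0$, which gives \eqref{int-cst} with $\alpha_R$ constant in $x_1$. The upper bound $\alpha_R\leq C$ follows by averaging \eqref{int-cst} over $x_1\in(-R,R)$: the boundary contribution is $O(1/R)$ by boundedness of $v(\pm R,\cdot,p_\pm)$, and the averaged flux term is uniformly controlled using (1) and the local Lipschitzness of $A_1$. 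The lower bound $\alpha_R\geq \alpha$ requires a sharper argument using $v(\cdot,p_-)$ as a test state. Integrating the cell equation \eqref{cell} for $v(\cdot,p_-)$ over $\TT^{N-1}$ gives an analogous identity whose constant, after further averaging over $x_1\in\TT$ and using $x_1$-periodicity of $v(\cdot,p_-)$, equals $\bar A_1(p_-)=\alpha$. Subtracting the two identities, the quantity
\[
\alpha - \alpha_R = -\partial_{x_1}\int_{\TT^{N-1}} W\,dx' + \int_{\TT^{N-1}} \bigl[A_1(x,v(\cdot,p_-))-A_1(x,\bar U_R)\bigr]\,dx', \qquad W:=v(\cdot,p_-)-\bar U_R,
\]
is constant in $x_1$. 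Evaluating at $x_1=-R$, where $W(-R,\cdot)\equiv 0$, the second integral vanishes, leaving $\alpha-\alpha_R = -\int_{\TT^{N-1}} \partial_{x_1} W(-R,x')\,dx'$. By (1), $W\geq 0$ in $\Omega_R$ with $W\equiv 0$ on $\{x_1=-R\}$, so $\partial_{x_1} W(-R,\cdot)\geq 0$ in the trace sense, giving $\alpha_R\geq \alpha$.

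\textbf{Plan for (4), and the main obstacle.} The uniform local $W^{2,q}$ bound in (4) is a standard elliptic bootstrap. Rewriting the PDE in non-divergence form $-\Delta \bar U_R + \partial_v A(x,\bar U_R)\cdot \nabla \bar U_R = -\dv_x A(x,\bar U_R)$, the right-hand side is uniformly bounded in $L^\infty$ by (1), and the first-order coefficient is in $L^\infty$ by the Lipschitz assumption on $A$ in $v$. Caccioppoli estimates on enlarged slices give uniform local $H^1$ bounds, and Calder\'on--Zygmund theory then iteratively upgrades this to $W^{2,q}$ for any finite $q$, with constants depending only on $\|\bar U_R\|_{L^\infty}$ and $A$. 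At the extremal slices $k\in\{-R,R-1\}$, boundary $L^p$ estimates use the smooth Dirichlet data $v(\pm R,\cdot,p_\pm)$ from Proposition~\ref{prop:v-properties}(i). I expect the delicate step to be the lower bound $\alpha_R\geq \alpha$ in (2); the argument above crucially relies on the exact matching of the Dirichlet condition $\bar U_R(-R,\cdot)=v(-R,\cdot,p_-)$ with the end state $v(\cdot,p_-)$, which is what makes $W$ vanish on the cross-section $\{x_1=-R\}$. Without this matching, no natural test profile would yield the identification of $\alpha_R$ with the ``jump flux'' $\alpha$ in the limit $R\to\infty$, so this sign-tracking at the artificial boundary is the technical heart of the compactness scheme.
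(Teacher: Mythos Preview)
Your proof is essentially correct, and for parts (2) and (4) it matches the paper's argument almost verbatim (in particular, the lower bound $\alpha_R\ge\alpha$ is obtained in both by evaluating the difference of the two flux identities at $x_1=-R$ and reading off the sign of $\partial_{x_1}W$).

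For (1) and (3), however, you take a genuinely different and more direct route. The paper does \emph{not} invoke a ready-made comparison principle for the linearized equation $-\Delta W+\dv(aW)=0$. For (1) it instead squeezes $\bar U_R$ between $v(\cdot,\underline p_R)$ and $v(\cdot,\bar p_R)$ for extremal $\bar p_R,\underline p_R$ (using the behaviour of $v(\cdot,p)$ as $p\to\pm\infty$), and then uses the Harnack inequality on $v(\cdot,\bar p_R)-\bar U_R\ge0$ to force the touching point to the boundary, whence $\bar p_R=p_-$, $\underline p_R=p_+$. For (3) the paper applies Kato's inequality to $(H_R)_-$, obtaining an \emph{inequality} $-\Delta(H_R)_-+\dv(a_R(H_R)_-)\le0$; it then uses the integration constant identity from (2) to show that the associated defect measure vanishes, upgrading the inequality to an equality, and finally applies Harnack. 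Your approach bypasses both detours by appealing directly to a comparison principle with Dirichlet data, which is cleaner and decouples (3) from (2).

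One caution: the phrase ``weak maximum principle for such divergence-form operators'' is imprecise. In non-divergence form the equation reads $-\Delta W+a\cdot\nabla W+(\dv a)W=0$, and the zeroth-order coefficient $\dv a$ has no sign, so the textbook weak maximum principle does not apply directly. The result you need is nonetheless true: the adjoint $-\Delta q-a\cdot\nabla q=0$ has no zeroth-order term, so it enjoys the classical maximum principle and Hopf's lemma, and a duality argument (pairing $W$ against the solution $q$ of $L^*q=\phi\ge0$, $q|_{\partial\Omega_R}=0$) yields $W\ge0$ from $W|_{\partial\Omega_R}\ge0$. You should make this step explicit; once it is, your treatment of (1) and (3) is both valid and more economical than the paper's.
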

\begin{proof}

For the time being, we still assume that the flux $A$ satisfies the assumptions of Proposition \ref{prop-exist} with $m=0$ and $n<1$, which will be removed in Remark \ref{rem:growth-A}.

\begin{enumerate}
\item A priori bound in $L^\infty$: 

First, notice that using elliptic regularity results together with a bootstrap argument, it is easily proved that $\bar U_R\in W^{2,q}(\Omega_R)$ for all $q<\infty$, and therefore 
 $\bar U_R \in \mathcal C(\overline{\Omega_R})$. Thus, thanks to \eqref{infinite} in Proposition \ref{prop-add} and to the assumption $m=0$, there exist $\bar p_R, \underline p_R$ with $\bar p_R> \underline p_R$ such that
$$
v(x, \underline p_R) \leq \bar U_R (x)\leq v(x, \bar p_R)\quad \forall x\in \bar \Omega_R.
$$

Let us choose $\bar p_R$ (resp. $\underline p_R$) as the smallest (resp. the largest) real number such that the above inequality is satisfied. Then necessarily, since $\bar U_R$ and $v(x,\bar p_R)$ are continuous and $\bar \Omega_R$ is compact, there exists $x_R\in \overline{\Omega_R}$ such that $\bar U_R(x_R)= v(x_R, \bar p_R)$. Let us argue by contradiction, and assume that $x_R$ is an interior point of $\Omega_R$.\\
Notice that $g_R:= v(x,\bar p_R) - \bar U_R$ is a non-negative solution of an elliptic equation of the type
$$
-\Delta g_R + \dv (a g_R)=0\quad \text{in } \Omega_R,
$$
where $a\in L^\infty(\Omega_R)$ is defined by
\[
a(x):=\int_0^1\partial_vA(x,\tau v(x,\bar p_R) + (1-\tau) \bar U_R(x)) d\tau.
\]  
Since $x_R$ is an interior point and $g_R(x_R)=0$, by the Harnack inequality, we have that $g_R$ vanishes on any compactly embedded subset of $\Omega_R$. Thus by continuity, $g_R\equiv 0$ on $\Omega_R$, which is in the contradiction with $\bar U_R \in \mathcal C(\overline{\Omega_R})$ and $p_+\neq p_-$.\\
Therefore, $x_R\in \partial \Omega_R$, thus $\bar p_R\in \{p_+, p_-\}$. Since $p_+<p_-$, we have $\bar p_R=p_-$.\\
Similar arguments lead to $\underline p_R=p_+$.

\item Integration constant:

Integrating  equation \eqref{approximate} on $\TT^{N-1}$ with $x_1$ fixed, we obtain
$$
- \frac{d^2}{dx_1^2}  \int_{\TT^{N-1}} \bar U_R(x_1, x')\:dx' +\frac{d}{dx_1}   \int_{\TT^{N-1}} A_1(x_1,x', \bar U_R(x_1, x'))\:dx' = 0\quad \forall x_1\in (-R,R),
$$
which provides  identity \eqref{int-cst}. Furthermore, notice that since for all $x_1 \in (-R, R-1)$,
$$
\alpha_R= \int_{\TT^{N-1}} \left(\bar U_R(x_1, x') - \bar U_R(x_1+1, x') \right)dx'+ \int_{x_1}^{x_1+1}\int_{\TT^{N-1}} A_1(x, \bar U_R(x))\:dx,
$$
the boundedness of $\bar U_R$ implies that $\alpha_R$ is bounded. Thus, there remains to prove the lower bound $\alpha_R\geq \alpha$. To this end, we consider  identity \eqref{int-cst} at $x_1=-R$ (notice that \eqref{int-cst} holds at $x_1=-R$ because $\bar U_R$ is smooth). Using the boundary condition, we have
$$
\begin{aligned}
\alpha_R&= \frac{d}{dx_1} \int_{\TT^{N-1}} \left( v(x_1, x', p_-) - \bar U_R(x_1, x')\right)\:dx' \Big|_{x_1=-R}
\\&+  \int_{\TT^{N-1}} A_1(-R,x', v(-R,x', p_-))\:dx' -   \frac{d}{dx_1} \int_{\TT^{N-1}}  v(x_1,x', p_-) dx' \Big|_{x_1=-R}.
\end{aligned}
$$
Since 
$v(x_1, x', p_-) - \bar U_R(x_1, x')\geq 0$ for all $(x_1,x')\in \Omega_R$, with equality at $x_1=-R$, we have that
$$
 \frac{d}{dx_1} \int_{\TT^{N-1}} \left( v(x_1, x', p_-) - \bar U_R(x_1, x')\right)\:dx'  \Big|_{x_1=-R}\geq 0.
$$
On the other hand, since
$$
- \Delta v(x,p_-) + \dv A(x, v(x,p_-))=0,
$$
we also have that
$$
 \int_{\TT^{N-1}} A_1(x_1,x', v(x_1, p_-))\:dx' -   \frac{d}{dx_1} \int_{\TT^{N-1}}  v(x_1,x' p_-) dx'=\text{constant}\quad \forall x_1\in \bbr.
$$
Integrating the above identity over $\TT$, we deduce that the above constant is $\bar A_1(p_-)= \alpha$. 
Choosing $x_1=-R$, the inequality $\alpha_R\geq \alpha$ is proved.

\item Monotony:

Consider the function $\bar U_R(x_1+1, x')$ defined on $(-R-1, R-1)\times \TT^{N-1}$. Since the flux $A$ is periodic, $\bar U_R(x_1+1, x')$ satisfies the same equation as $\bar U_R$. Moreover, using the $L^\infty$ a priori estimates and the boundary conditions on $\bar U_R$, we have that
$$
\bar U_R(x_1+1, x') - \bar U_R(x_1, x') \leq 0\quad \text{at } x_1=-R\text { and at }x_1=R-1.
$$
Set $H_R:=\bar U_R(x_1+1, x') - \bar U_R(x_1, x')$ and $(H_R)_-:=-H_R\mathbf 1_{H_R \leq 0 }$. Since the function $x\mapsto -x\mathbf1_{x \leq 0}$ is convex, we have that in $ \mathcal{D}'((-R, R-1) \times \TT^{N-1})$,
\beq\label{HR}
- \Delta (H_R)_- + \dv_x \left( -\mathbf 1_{H_R \leq 0 } (A(x, \bar U_R(x_1+1, x')) -A(x,\bar U_R(x_1, x')) )\right) \leq 0.
\eeq
We denote by $-m_R$ the left hand-side of the above inequality. Then $m_R$ is a non-negative measure. Moreover, straightforward integrations entail
\beq\label{right}
\begin{aligned}
&-m_R((-R, R-1) \times \TT^{N-1})\\
&\quad =\left[ -\partial_1\int_{\TT^{N-1}}  (H_R)_- dx' \right]^{x_1=R-1}_{x_1=-R}\\
&\qquad + \left[ \int_{\TT^{N-1}}  -\mathbf 1_{H_R \leq 0 } \Big(A_1(x,\bar U_R(x_1+1, x')) -A_1(x,\bar U_R(x_1, x')) \Big) dx' \right]^{x_1=R-1}_{x_1=-R}.
\end{aligned}
\eeq
Since $ H_R\leq 0$ at $x_1=-R$ and at $x_1= R-1$, we have 
\[
\partial_1 (H_R)_- = -\mathbf 1_{H_R \leq 0 }\partial_1 H_R= -\partial_1 H_R \quad \text{at }x_1=-R\text{ and } R-1. 
\] 
Using \eqref{int-cst}, we have that
$$
\begin{aligned}
&-\left(\partial_1\int_{\TT^{N-1}}  (H_R)_- dx' \right)\Big|_{x_1=R-1}\\
&\quad + \left( \int_{\TT^{N-1}}  -\mathbf 1_{H_R \leq 0 } \Big(A_1(x,\bar U_R(x_1+1, x')) -A_1(x,\bar U_R(x_1, x')) \Big) dx' \right)\Big|_{x_1=R-1}\\ 
&= \partial_1\int_{\TT^{N-1}} \bar U_R(R, x')\:dx'-\int_{\TT^{N-1}}   (A_1(R,x',\bar U_R(R, x')) dx'\\
&\quad -\partial_1\int_{\TT^{N-1}}\bar  U_R(R-1, x')\:dx'+\int_{\TT^{N-1}}   (A_1(R-1, x',\bar U_R(R-1, x')) dx'\\
&= \alpha_R-\alpha_R=0.
\end{aligned}
$$
Similarly we have the same result at $x_1=-R$. Therefore, it follows from \eqref{right} that $m_R((-R, R-1) \times \TT^{N-1})=0$, and thus
$$
- \Delta (H_R)_- + \dv_x \left( -\mathbf 1_{H_R \leq 0 } (A(x,\bar U_R(x_1+1, x')) -A(x,\bar U_R(x_1, x')) )\right) =0.$$
That is, $(H_R)_-$ is a non-negative solution of an elliptic equation of the type:
\[
- \Delta (H_R)_- + \dv_x \left(a_R(H_R)_- \right) =0,
\]
where $a_R\in L^{\infty}((-R, R-1) \times \TT^{N-1})$ is defined by
\[
a_R:=\int_0^1\partial_vA(x,\tau \bar U_R(x+e_1)+ (1-\tau)\bar U_R(x)) d\tau.
\] 
Let us argue by contradiction and assume that $H_R(x_0)\ge 0$ for some $x_0\in (-R, R-1) \times \TT^{N-1}$. Then $(H_R)_-(x_0)=0$,and Harnack's inequality implies that $(H_R)_-= 0$ on $(-R, R-1) \times \TT^{N-1}$.\\
In that case, $H_R(x)\ge 0$ for all $x\in (-R, R-1) \times \TT^{N-1}$, and since $R$ is an integer, we obtain
\[
v(x,p_+)=\bar U_R(R, x')\ge \bar U_R(R-1, x')\ge \cdots \ge \bar U_R(-R, x')=v(x,p_-).
\]
This is in contradiction with $p_+<p_-$ as \eqref{p-assume}. Therefore, we deduce that $H_R(x)<0$ for all $x\in (-R, R-1) \times \TT^{N-1}$, and
$$
\bar U_R(x_1+1, x') - \bar U_R(x_1, x') < 0\quad \forall x\in (-R, R-1) \times \TT^{N-1}.
$$
\item Uniform bounds in Sobolev spaces:

For any $k\in \{-R+1, \cdots, R-2\}$, $\bar U_R$ solves the equation
$$
-\Delta \bar U_R= - \dv A(x,\bar U_R) \text{ in }(k,k+1)\times\TT^{N-1},
$$
with the inherited boundary conditions, which are bounded in $L^\infty(\TT^{N-1})$ uniformly in $R$ and $k$ thanks to (1). Using the $L^\infty$ a priori bound together with interior elliptic estimates, we infer that $\bar U_R$ is bounded in $H^1((k,k+1)\times\TT^{N-1})$ (and even in $W^{1,q}$ for any $q<\infty$), uniformly in $k$ and $R$. Using a classical bootstrap argument, we then prove that $\bar U_R$ is bounded in $W^{2,q}((k,k+1)\times\TT^{N-1})$ for any $q<\infty$. Using the fact that the boundary conditions at $-R$ and $R$ are smooth and bounded, we derive similar bounds on $(-R,-R+1)\times \TT^{N-1}$ and $(R-1, R )\times \TT^{N-1}$. Hence the result follows.

\end{enumerate}

\end{proof}

\begin{remark}
\label{rem:growth-A}
We here explain how we can remove the constraints $m=0$ and $n<1$ on growth assumptions of the flux. Assume that $A$ belongs to $W^{1,\infty}_{loc}(\TT^N\times \RR)$ and that there exist two periodic solutions $v(\cdot, p_\pm)$ of \eqref{cell} with $p_+\neq p_-$ satisfying \eqref{RH-1}.\\
Let
\[
C_0:=\max (\| v(\cdot, p_+)\|_\infty, \| v(\cdot, p_-)\|_\infty).
\] 
and $\chi\in \mathcal C^\infty_0(\RR)$ such that $\chi(\xi)= 1$ for $|\xi|\leq C_0+1$. Define
$$
A_\chi(x,\xi):= A(x,\xi)\chi(\xi), \quad x\in \TT^N,\ \xi \in \RR
$$
Then the flux $A_\chi$ belongs to $W^{1,\infty}(\TT^N\times \RR)$ and satisfies the growth assumptions of Proposition \ref{prop-exist} with $m=n=0$. Therefore, for any $p\in \RR$ there exists a unique periodic solution $v_\chi(\cdot, p)$ of
$$
- \Delta v_\chi(x) + \dv A_\chi(x, v_\chi(x,p))=0\text{ in } \TT^N,\quad \mean{v_\chi(\cdot, p)}=p.
$$
It follows from the uniqueness of $v_\chi$ and from the definition of $A_\chi$ that $v_\chi(\cdot, p_\pm)= v(\cdot, p_\pm)$.

Now, we can apply the results proved above to the flux $A_\chi$. Thus there exists a unique solution $\bar U^\chi_R$ of equation \eqref{approximate} with $A$ replaced by $A_\chi$, and $\bar U^\chi_R$ enjoys the properties of Lemma \ref{lem:properties}. In particular, $\| \bar U^\chi_R\|_\infty \leq C_0$, and thus
$$
A_\chi(x, \bar U^\chi_R)= A(x, \bar U^\chi_R)\quad \forall x\in \Omega_R.
$$
Hence $\bar U^\chi_R$ is also a solution of \eqref{approximate} with the original flux $A$. Thus we can now drop the $\chi$'s, and consider arbitrary fluxes $A\in W^{1,\infty}_{loc}(\TT^N\times \RR)$ satisfying the assumptions of Theorem \ref{thm:shock-ex}.

\end{remark}

\subsection{Passing to the limit as $R\to \infty$}$ $

$\rhd$\textit{ First step:  Extension to $\RR \times \TT^{N-1}$ and ``normalization''.}

 We first extend $\bar U_R$ to $\RR\times \TT^{N-1}$ by setting 
\beq\label{U_R}
\bar U_R(x)= v(x, p_+)\text{ for } x_1\geq R,\quad \bar U_R(x)= v(x, p_-)\text{ for } x_1\leq- R.
\eeq
Thanks to (4) of Lemma \ref{lem:UR} and to the regularity of $v$, the above function $\bar U_R$ is continuous and bounded uniformly in $R$ in $W^{1,\infty} (\bbr \times \bbt^{N-1})$. Moreover $\bar U_R (\cdot + e_1) \leq \bar U_R$ over the whole space.

Before passing to the limit, one issue is that all integer translations in $x_1$ of shocks are also shocks.  And a shock translated by $ke_1$, with $|k|\gg 1$, is very close to one of the end states $v(\cdot, p_\pm)$ on compact sets in all Sobolev norms. In order to prevent $\bar U_R$ from converging towards $v(\cdot, p_\pm) $, we fix the value (or rather, the mean value) of (a translate of) $\bar U_R$ at a given point. We call this step the ``normalization'' of $\bar U_R$.

More precisely, let $\bar p \in (p_+, p_-)$ be arbitrary (for instance, take $\bar p = \frac{p_++p_-}{2}$). Then since 
\[
\int_0^1\int_{\TT^{N-1}} \bar U_R(R+x_1,x')\:dx'\:dx_1=p_+\quad\mbox{and}\quad 
\int_0^1\int_{\TT^{N-1}} \bar U_R(-R-1+x_1,x')\:dx'\:dx_1=p_-,
\]
there exists $x_R\in (-R-1,R)$ such that 
\[
\int_0^1\int_{\TT^{N-1}} \bar U_R(x_R+x_1,x')\:dx'\:dx_1 = \bar p.
\]
Let $k_R:=\lfloor x_R\rfloor\in \ZZ$ and define $\bar V_R:=\bar U_R(x_1+k_R,x')$. Then since $A$ and $v$ are periodic in their first variable, 
$\bar V_R$ solves
\beq\label{V_R}
\begin{aligned}
&- \Delta \bar V_R + \dv A(x, \bar V_R)=0\text{ in }(-R-k_R, R-k_R)\times \TT^{N-1},\\
&\bar V_R(-R-k_R,x')= v(-R-k_R, x', p_-),\\
& \bar V_R(R-k_R,x')= v(R-k_R, x', p_+),\quad x'\in\bbt^{N-1},
\end{aligned}
\eeq
and there exists $y_R\in [0,1)$ ($y_R= x_R - k_R$) such that
$$
\int_0^1\int_{\TT^{N-1}} \bar V_R(y_R+x_1, x')\:dx'\:dx_1= \bar p.
$$
Additionally, $\bar V_R$ inherits from $\bar U_R$ all the properties listed in Lemma \ref{lem:properties}.
\vskip2mm

$\rhd$\textit{ Second step: Limit $R\to \infty$.}

Thanks to the bounds listed above and in Lemma \ref{lem:properties}, we can extract a subsequence $R_m$ and find a function $\bar V$ such that $\bar V_{R_m} \rightharpoonup \bar V$ in $W^{1,q}(K), ~\forall q\in [1,\infty)$ for any compact set $K\subset \RR\times \TT^{N-1}$, and thus $\bar V_{R_m} \to \bar V$ strongly in $\mathcal C^\alpha(K)$ for some $\alpha>0$. Furthermore, up to a further extraction of a subsequence, there exist some constants $x_+, x_-, \bar y$ and $\bar\alpha$ such that
$$
R-k_R \to x_+ \in [0,+\infty], \quad -R-k_R\to x_-\in [-\infty, 0],\quad y_R\to \bar y \in [0,1],\quad \alpha_R\to \bar \alpha \in [\alpha, C].
$$
Notice also that $x_+ - x_-= +\infty$. Thanks to the strong convergence of $\bar V_{R_m}$ in $\mathcal C^\alpha(K)$, we have
\beq\label{V-p}
\int_0^1\int_{\TT^{N-1}} \bar V(\bar y+x_1, x')\:dx_1\:dx'= \bar p.
\eeq
Furthermore, if $x_+<+\infty$ (resp. $x_->-\infty$), then $\bar V(x_+, x')= v(x_+, x', p_+)$ (resp. $\bar V(x_-, x')= v(x_-, x', p_-)$). 

We can also pass to the limit in \eqref{V_R}, thus $\bar V$ is a solution of
$$
-\Delta \bar V + \dv A(x, \bar V)=0\quad \text{on } (x_-, x_+)\times \TT^{N-1}.
$$

Eventually, we have further properties on $\bar V$ from the properties listed in Lemma \ref{lem:properties} as follows:
\begin{itemize}
\item $L^\infty $ bound : $v(x, p_+) \leq \bar V (x) \leq v(x, p_-)$ for all $x\in\RR\times \TT^{N-1}$;
\item Additional regularity: $\bar V_{R_m}\rightharpoonup \bar V$ in $W^{2,q}(K)$ for any compact set $K \subset (x_-, x_+ )\times \TT^{N-1}$ and for all $q<\infty$, and therefore $\bar V_{R_m}\to \bar V$ strongly in $W^{1,q}(K)$ for such compact sets $K$ and for any $q\in [1,\infty]$. Moreover, $\bar V \in W^{1,\infty} (\Omega)$;
\item Integration constant: 
\beq\label{int-cst-V}
-\frac{d}{dx_1} \int_{\TT^{N-1}} \bar V(x_1, x')\:dx' +  \int_{\TT^{N-1}} A_1(x_1,x', \bar V(x_1, x'))\:dx' = \bar \alpha \quad \forall x_1\in (x_-, x_+).
\eeq
\item Monotony: $\bar V (x + e_1) \leq \bar V(x)$.
\end{itemize}

\vskip2mm

$\rhd$\textit{ Third step: Limit states of $\bar V$ and value of the integration constant.}

Let us consider the sequence $(u_k)_{k\in\bbz}$ defined by
\[
u_k : x\in [0,1]\times \TT^{N-1} \mapsto \bar V  (x + k e_1).
\]
Thanks to the monotony property and the a priori bounds for $\bar V$, the sequence $(u_k)_{k\in\bbz}$ is monotonous and bounded in $W^{1,\infty}( [0,1]\times \TT^{N-1})$. Thus for all $x\in [0,1]\times \TT^{N-1}$, $(u_k(x))_{k\in\bbz}$ has a finite limit as $k\to \pm \infty$, which we denote as $u_\pm$, and $u_\pm$ is bounded and Lipschitz continuous.

Since $u_k(1, x')= u_{k+1}(0, x')$ for all $k\in \ZZ$, we deduce that $u_\pm(0, x')= u_\pm (1,x')$ and thus $u_\pm$ is periodic. Let us now prove that $u_\pm= v(\cdot, p_\pm)$. We consider for instance the function $u_+$, the argument for $u_-$ is strictly identical.

If $x_+<\infty$, since \eqref{U_R} yields
\[
\bar V_R(x_1,x')= v(x_1,x', p_+),\quad x_1\ge R-k_R,~x'\in \bbt^{N-1}
\] 
we deduce easily that $\bar V (x)= v(x, p_+)$ for all $x\in (x_+,\infty)\times\bbt^{N-1}$, and as a consequence, $u_+= v(x, p_+)$.

If $x_+=+\infty$, extending $u_+$ by periodicity, we have $u_+ \in \mathcal C (\TT^N)$ and $\bar V (\cdot + k e_1) \to u_+$ locally uniformly as $k\to + \infty$. Since every $u_k$ is a solution of
$$
-\Delta u_k + \dv A(x, u_k)=0,
$$
taking $k\to \infty$ in the above equation, we deduce that $u_+$ is a periodic solution of the above equation. Therefore there exists $\bar p_+\in \RR$ such that $u_+= v(\cdot, \bar p_+)$. Notice that since $v(\cdot, p_-)\leq \bar V\leq v(\cdot,  p_+)$, we have $\bar p_+ \in [p_+, \bar p_-]$.\\
In particular,
$$
-\frac{d}{dx_1} \int_{\TT^{N-1}} u_+(x_1, x')\:dx' +  \int_{\TT^{N-1}} A_1(x_1,x', u_+(x_1, x'))\:dx' = \bar A_1(\bar p_+)\quad \forall x_1\in\bbt.
$$
Taking the integral of the above identity over $\bbt$ and comparing with \eqref{int-cst-V}, we obtain $\bar A_1(\bar p_+)= \bar \alpha$.\\
Since $\bar\alpha \geq \alpha$ and $\bar p_+ \in [p_+, \bar p_-]$, the assumption \eqref{p-assume} leads to $\bar p_+ \in \{p_+, p_-\}$ and
\beq\label{balpha-alpha}
\bar \alpha= \alpha= \bar A_1(p_\pm).
\eeq
Since $\bar V(\bar y +x_1+ k, x')\leq \bar V(\bar y + x_1, x')$ for all $(x_1, x')\in [0,1]\times \TT^{N-1}$ and $k\in\NN$,
$$
u_+(\bar y + x_1, x')\leq \bar V(\bar y + x_1, x')\quad \forall (x_1, x')\in[0,1]\times \TT^{N-1}.
$$
Taking the average of the above inequality over $[0,1]\times \TT^{N-1}$, it follows from \eqref{V-p} that $\bar p_+ \leq \bar p <p_-$. Therefore $\bar p_+= p_+$.

Hence we conclude that
$$
\bar V(x_1,x')- v(x_1, x', p_\pm) \to 0 \quad \text{as } x_1\to \pm \infty\text{ in } L^\infty(\TT^{N-1}).
$$
Notice also that since \eqref{int-cst-V} is true on $(x_-, x_+)\times \TT^{N-1}$, and at least one of the properties $x_-=-\infty$ or $x_+=+\infty$ always holds, it follows from the argument above that \eqref{balpha-alpha} always holds.

\vskip2mm

$\rhd$\textit{ Fourth step: Conclusion.}

First of all, if $x_+=+\infty$ and $x_-=-\infty$, then gathering the properties of the previous steps, $\bar V $ is a stationary shock with  end states $v(\cdot, p_+)$ and $v(\cdot, p_-)$, thus Theorem \ref{thm:shock-ex} is proved.

Therefore we now consider the case $x_+<+\infty$ (the case $x_->-\infty$ is treated in a similar fashion). In this case, we see that the equation
\beq \label{eq:V}
-\Delta \bar V + \dv A(x,\bar V)=0
\eeq
is satisfied on $(-\infty, x_+)\times \bbt^{N-1}$. Of course, since $\bar V(x)= v(x, p_+)$ for all $x_1>x_+$, the equation is also satisfied on $(x_+, \infty)\times \TT^{N-1}$. Notice that $\bar V$ is continuous at the point $x_1=x_+$, but its derivative in $x_1$ might have a jump, and therefore there might be a Dirac mass in $\Delta \bar V$ at $x_1=x_+$. We prove that this is not the case.\\

Thus, using \eqref{int-cst-V} and \eqref{balpha-alpha}, we have that
\beq\label{f-1}
-\frac{d}{dx_1} \int_{\TT^{N-1}} \bar V(x_1, x')\:dx'\Big|_{x_1=x_+^-} +  \int_{\TT^{N-1}} A_1(x_1,x', \bar V(x_1, x'))\:dx' \Big|_{x_1=x_+^-}=\bar A_1(p_+),
\eeq
and recall that
\beq\label{f-2}
-\frac{d}{dx_1} \int_{\TT^{N-1}} v(x_1, x', p_+)\:dx'\Big|_{x_1=x_+} +  \int_{\TT^{N-1}} A_1(x_1,x',v(x_1, x', p_+))\:dx'\Big|_{x_1=x_+}=\bar A_1(p_+).
\eeq
Let $J(x')$ be the jump of $\p_{x_1} \bar V$ at $x_1=x_+$, i.e.
$$
J(x'):= \partial_{x_1} v(x_+, x', p_+) - \partial_{x_1} \bar V (x_+^-, x').
$$
Since $\bar V(x_+^-, x')= v(x_+, x', p_+)$, combining \eqref{f-1} with \eqref{f-2}, we get 
\[
 \int_{\TT^{N-1}} J(x') dx' =0.
\] 
Moreover, since $\bar V(x)\geq v(x, p_+)$ for all $x\in \RR\times \TT^{N-1}$, with equality for $x_1\geq x_+$, we have $J(x')\geq 0$ for all $x'\in \TT^{N-1}$, consequently $J\equiv 0$. Thus $\partial_{x_1} \bar V$ has no jump at $x_1=x_+$, which implies that the equation \eqref{eq:V} is satisfied over the whole space.\\
Hence $\bar V $ is a stationary shock with  end states $v(\cdot, p_+)$ and $v(\cdot, p_-)$, which completes the proof of Theorem \ref{thm:shock-ex}.

\begin{rem}
In fact, the situation where $x_+<+\infty$ (resp. $x_->-\infty$) cannot happen. Indeed, in that case, $w=\bar V-v(x,p_+)$ (resp. $w=v(x, p_-)- \bar V$) would be the non-negative solution of an elliptic equation of the type
$$
-\Delta w + \dv (aw)=0\quad \text{in } \bbr\times\bbt^{N-1},
$$
with $a\in L^\infty(\bbr\times\bbt^{N-1})$, and $w\equiv 0$ for $x_1\geq x_+$ (resp $x_1\leq x_-$). Using once again the Harnack inequality, we infer that $w$ has to vanish identically over $\bbr\times\bbt^{N-1}$, which leads to a contradiction. Therefore we always have $x_+=+\infty$ and $x_-=-\infty$.
\end{rem}

\section{Properties of stationary shocks with periodic end states}
\label{sec:properties}
We first show that the Rankine-Hugoniot condition \eqref{RH-1} is in fact a necessary condition for the existence of shock waves.

\begin{lemma}\label{lem-indiana}
Assume $A_1\in W^{1, \infty}_{loc} (\TT^N\times\RR)$. Let $\bar{U}$ be a stationary shock wave connecting $v(\cdot, p_-)$ to $v(\cdot, p_+)$.
Then $\bar{A}_1(p_-)=\bar{A}_1(p_+)=:\alpha$, and $\bar{U}$ satisfies
\[
-\frac{d}{dx_1}\int_{\bbt^{N-1}}\bar{U}(x_1,x^{\prime})dx^{\prime} + \int_{\bbt^{N-1}}A_1(x_1,x^{\prime},\bar{U}(x_1,x^{\prime}))dx^{\prime}=\alpha.
\]
\end{lemma}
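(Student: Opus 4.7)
The plan is to integrate the stationary equation $-\Delta \bar U + \mathrm{div}_x A(x,\bar U)=0$ over $\bbt^{N-1}$ with $x_1$ fixed, exploit the resulting conservation law, and then recover the value of the conserved quantity by passing to the limit at $x_1=\pm\infty$.

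First, by elliptic regularity applied to the equation satisfied by $\bar U$ (a bootstrap based on $A\in W^{1,\infty}_{loc}$ and the local boundedness inherited from the asymptotic states), $\bar U\in W^{2,q}_{loc}(\Omega)$ for every $q<\infty$, so all the integrals below are classically well-defined. Integrating the equation over $\bbt^{N-1}$ with $x_1$ fixed, the derivatives $\partial_{x_i}A_i$ for $i\ge 2$ disappear by periodicity, and we obtain
\beq\label{plan:cons}
\frac{d}{dx_1}\left[-\frac{d}{dx_1}\int_{\bbt^{N-1}}\bar U(x_1,x')\,dx'+\int_{\bbt^{N-1}}A_1(x_1,x',\bar U(x_1,x'))\,dx'\right]=0,
\eeq
so the bracketed expression $F(x_1)$ is constant on $\bbr$. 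It remains to identify this constant as $\bar A_1(p_-)$ and as $\bar A_1(p_+)$.

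The main subtlety is that $\bar U - v(\cdot,p_\pm)\to 0$ only in $L^\infty(\bbt^{N-1})$ along $x_1\to\pm\infty$; the derivative $\partial_{x_1}\int_{\bbt^{N-1}}\bar U\,dx'$ need not converge pointwise. To circumvent this, I exploit the fact that $F$ is constant and therefore equals its average over any unit interval: for any integer $n$,
\[
F(x_1)=\int_n^{n+1}F(s)\,ds=-\int_{\bbt^{N-1}}\bigl[\bar U(n+1,x')-\bar U(n,x')\bigr]dx'+\int_n^{n+1}\!\!\int_{\bbt^{N-1}}A_1(s,x',\bar U(s,x'))\,dx'\,ds.
\]
For the boundary term, since $v(\cdot,p_\pm)$ is $\bbt^N$-periodic one has $v(n+1,x',p_\pm)-v(n,x',p_\pm)\equiv 0$, and the asymptotic condition \eqref{shock-b} implies $\bar U(n\!+\!1,\cdot)-\bar U(n,\cdot)\to 0$ in $L^\infty(\bbt^{N-1})$ as $n\to\pm\infty$. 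For the integral term, using the periodicity of $A_1$ in $x_1$ and the change of variable $s=n+t$,
\[
\int_n^{n+1}\!\!\int_{\bbt^{N-1}}A_1(s,x',\bar U(s,x'))\,dx'\,ds=\int_0^1\!\!\int_{\bbt^{N-1}}A_1(t,x',\bar U(n+t,x'))\,dx'\,dt,
\]
and the uniform convergence $\bar U(n+t,x')\to v(t,x',p_\pm)$ on $[0,1]\times\bbt^{N-1}$, combined with the local Lipschitz continuity of $A_1$ in its last argument (applied on the compact range of values of $\bar U$ and $v(\cdot,p_\pm)$), yields the limit $\int_{\bbt^N}A_1(t,x',v(t,x',p_\pm))\,dx'\,dt=\bar A_1(p_\pm)$.

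Letting $n\to+\infty$ gives $F(x_1)=\bar A_1(p_+)$, and letting $n\to-\infty$ gives $F(x_1)=\bar A_1(p_-)$; hence $\bar A_1(p_-)=\bar A_1(p_+)=:\alpha$, and the conservation identity $F(x_1)\equiv \alpha$ is exactly the claimed formula. The only step requiring any real care is the one just described, namely replacing a pointwise evaluation of $F$ by its unit-interval average so that only the weak convergence of $\bar U$ (and not of its $x_1$-derivative) is used.
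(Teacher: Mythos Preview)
Your proof is correct and follows essentially the same strategy as the paper: integrate the stationary equation over $\bbt^{N-1}$ to obtain a conserved quantity, then identify its value by averaging over a unit interval in $x_1$ and passing to the limit, so that only the $L^\infty$ convergence of $\bar U - v(\cdot,p_\pm)$ (not of its derivative) is needed. The only cosmetic difference is that the paper first subtracts the identity satisfied by $v(\cdot,p_\pm)$ and then integrates over $[m,m+1]$, whereas you integrate $F$ directly and invoke the $x_1$-periodicity of $A_1$ and $v$ explicitly; the two computations are equivalent.
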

\begin{proof}
Since the shock wave $\bar U$ is a solution of 
\[
-\Delta \bar U + \dv A(x,\bar U)=0,
\]
there exists a constant $\bar C$ such that
\[
-\frac{d}{dx_1}\int_{\bbt^{N-1}}\bar{U}(x)dx^{\prime} + \int_{\bbt^{N-1}}A_1(x,\bar{U}(x)) dx^{\prime}=\bar C,\quad \forall x_1\in \RR.
\] 
Moreover, since
\[
-\frac{d}{dx_1}\int_{\bbt^{N-1}} v(x,p_{+})dx^{\prime} + \int_{\bbt^{N-1}}A_1(x,v(x,p_{+})) dx^{\prime}=\bar{A}_1(p_+),
\] 
we have
\be\label{int-cst-U}
-\frac{d}{dx_1}\int_{\bbt^{N-1}}\Big(\bar{U}(x)- v(x,p_{+})\Big) dx^{\prime} + \int_{\bbt^{N-1}}\Big(A_1(x,\bar{U}(x)) -A_1(x,v(x,p_+))\Big)dx^{\prime}=\bar C-\bar{A}_1(p_+).
\ee
Notice that \eqref{shock-b} and $A_1\in W^{1, \infty}_{loc} (\TT^N\times\RR)$ yield that for any $\eps\in (0,1)$, there exists $m>0$ such that for all $x_1>m$, 
\[
|\bar{U}(x)- v(x,p_{+})|\le \eps,\quad |A_1(x,\bar{U}(x)) -A_1(x,v(x,p_{+}))|\le \eps\|\partial_v A_1\|_{L^{\infty}(\bbt^N\times (-K,K))} ,
\] 
where the constant $K$ is such that $K\geq \| v(\cdot, p_+)\|_\infty +1$.
Thus, integrating \eqref{int-cst-U} over $[m,m+1]$, we have that
\[
|\bar C-\bar{A}_1(p_+)|\le C\eps\quad \forall \eps\in (0,1),
\] 
which implies that $\bar C=\bar{A}_1(p_+)$. Similarly, applying the above argument to $ v(\cdot,p_{-})$, we have $\bar C=\bar{A}_1(p_-)$.
\end{proof}

If we impose additional conditions on the flux $A$ at the  two end states, the shock wave exponentially converges towards the end states:
\begin{proposition}\label{prop-exp}
Let $\bar{U}$ be a stationary shock wave connecting $v(\cdot, p_-)$ to $v(\cdot, p_+)$ satisfying $v(\cdot, p_+)\le\bar U\le v(\cdot, p_-)$. Assume that $A_1\in (W^{1, \infty}_{loc}\cap  \mathcal{C}^1) (\TT^N\times\RR)$ and that there exist periodic functions $\phi \in L^1(\bbt)$ and $\psi \in L^1(\bbt)$ such that the Lax conditions \eqref{Lax-1}, \eqref{Lax-2} are satisfied.

Then there exist positive constants $R$ and $C_R$ such that for all $\pm x_1>R$, 
\[
\int_{\bbt^{N-1}}|\bar{U}(x_1,x^{\prime})-v(x_1,x^{\prime},p_{\pm})|dx^{\prime}<C_R\, e^{a_{\pm}x_1/2}\int_{\bbt^{N-1}}|\bar{U}(\pm R,x^{\prime})-v(\pm R,x^{\prime},p_{\pm})|dx^{\prime}. 
\]
\end{proposition}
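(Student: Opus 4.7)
The plan is to reduce the PDE estimate to a scalar differential inequality on the cross-sectional mass $f(x_1) := \int_{\bbt^{N-1}} (\bar U - v(\cdot,p_+))\,dx'$, and then extract the exponential rate via Gr\"onwall combined with the periodic average of $\psi$. I focus on the case $x_1>R$; the treatment of $x_1<-R$ is strictly analogous, with $w_-:= v(\cdot,p_-)-\bar U\ge 0$, $\phi$ and $a_-$ replacing $w$, $\psi$ and $a_+$.

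Setting $w := \bar U - v(\cdot,p_+)\ge 0$, I note that both $\bar U$ and $v(\cdot,p_+)$ satisfy the integrated identity of Lemma \ref{lem-indiana} with the same constant $\alpha = \bar A_1(p_+)$. Subtracting these two identities and applying the mean value theorem to linearize the flux gives
\[
-f'(x_1) + \int_{\bbt^{N-1}} a(x)\,w(x)\,dx' = 0, \qquad a(x) := \int_0^1 \partial_v A_1\bigl(x,\,v(x,p_+) + s w(x)\bigr)\,ds,
\]
where $f(x_1) = \int_{\bbt^{N-1}} w(x_1,x')\,dx'$ (absolute values are unnecessary because $w\ge 0$).

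The next step is to dominate $a$ by $\psi$ for $x_1$ large. Since $\|w(x_1,\cdot)\|_{L^\infty(\bbt^{N-1})}\to 0$ as $x_1\to+\infty$ by Definition \ref{def:shock}, and since $\partial_v A_1$ is uniformly continuous on $\bbt^N\times[-K,K]$ (by the $\mathcal{C}^1$ hypothesis on $A_1$, $K$ being an $L^\infty$ bound for $\bar U$ and $v(\cdot,p_+)$), combining this with the Lax condition \eqref{Lax-2} yields, for any $\delta>0$ and some $R_\delta$ large enough,
\[
a(x) \le \partial_v A_1(x,v(x,p_+)) + \delta \le \psi(x_1) + \delta,\qquad \forall x_1>R_\delta,\ \forall x'\in\bbt^{N-1}.
\]
Because $w\ge 0$ and $\psi(x_1)$ does not depend on $x'$, this pointwise bound integrates into the scalar differential inequality $f'(x_1) \le (\psi(x_1) + \delta)\,f(x_1)$ on $(R_\delta,\infty)$.

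Finally I invoke Gr\"onwall's inequality and exploit the $\bbt$-periodicity of $\psi$: a standard argument based on writing the length $x_1-R_\delta$ as an integer part plus a remainder of length less than one gives $\int_{R_\delta}^{x_1}\psi(s)\,ds \le a_+(x_1-R_\delta) + C_\psi$ where $C_\psi$ depends only on $\|\psi\|_{L^1(\bbt)}$ and $|a_+|$. Choosing $\delta := -a_+/2 > 0$ yields
\[
f(x_1) \le C\,f(R_\delta)\,\exp\!\left(\tfrac{a_+}{2}(x_1-R_\delta)\right) = C_R\,f(R_\delta)\, e^{a_+ x_1/2},
\]
with $C_R$ absorbing $e^{-a_+ R_\delta/2}$ and the oscillation constant $C_\psi$; setting $R := R_\delta$ matches the statement. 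The main (mild) technical point is the passage from the pointwise-in-$x'$ bound on $a(x)$ to the scalar inequality for $f$, which depends essentially on the sign of $w$, guaranteed by the ordering hypothesis $v(\cdot,p_+)\le \bar U\le v(\cdot,p_-)$. Everything else is Gr\"onwall plus a bookkeeping of the periodic average.
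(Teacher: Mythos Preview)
Your proposal is correct and follows essentially the same approach as the paper: integrate the shock equation over $\bbt^{N-1}$, use Lemma \ref{lem-indiana} to identify the constant, linearize the flux difference by the mean value theorem, invoke the Lax condition together with the continuity of $\partial_v A_1$ and the asymptotic behavior \eqref{shock-b} to bound the resulting coefficient, and conclude with Gr\"onwall plus the periodic-average estimate. The paper carries out the $p_-$ case explicitly while you do the $p_+$ case, but the arguments are otherwise step-for-step identical.
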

\begin{proof}
We show the convergence towards the left end state $v(\cdot, p_-)$. First of all, we see that Lemma \ref{lem-indiana} yields
\begin{align*}
\begin{aligned}
&\frac{d}{dx_1} \int_{\bbt^{N-1}}|\bar{U}(x)-v(x,p_{-})|dx^{\prime}\\
&\quad=\frac{d}{dx_1} \int_{\bbt^{N-1}}(v(x,p_{-})-\bar{U}(x))dx^{\prime}\\
&\quad= \int_{\bbt^{N-1}}(A_1(x,v(x,p_{-}))-A_1(x,\bar{U}(x)))dx^{\prime}\\
 &\quad=\int_{\bbt^{N-1}}\left(\int_0^1\partial_v A_1(x,\tau \bar{U}+(1-\tau)  v(x,p_-) )d\tau\right)\; (v(x,p_-)-\bar{U}(x)) dx^{\prime}.
\end{aligned}
\end{align*}
It follows from \eqref{shock-b} that 
\[
\forall \eps>0, ~\exists R>0\quad\mbox{s.t.}\quad  |\bar{U}(x)-v(x,p_-)|<\eps\quad\mbox{for all}~x_1 < -R, ~ x^{\prime}\in\bbt^{N-1}.
\]
Since the function $\p_v A_1$ is continuous, using \eqref{Lax-1}, we infer that there exists a constant $C$ such that 
$$
\partial_v A_1(x,\tau \bar{U}+(1-\tau)  v(x,p_-) )\geq \phi(x_1)-C\eps\quad\mbox{for all}~ x_1<-R,~ x^{\prime}\in\bbt^{N-1},\ \tau\in [0,1].
$$
Thus for all $x_1 < -R$,
\[
\frac{d}{dx_1}\int_{\bbt^{N-1}}|\bar{U}(x)-v(x,p_-)|dx^{\prime} \geq  \left(\phi(x_1)-C\eps\right)\int_{\bbt^{N-1}} |\bar{U}(x)-v(x,p_-)| dx^{\prime}.
\]
This inequality implies that for all $x_1 < -R$
\[
\int_{\bbt^{N-1}}|\bar{U}(x)-v(x,p_-)|dx^{\prime}<\int_{\bbt^{N-1}}|\bar{U}(x)-v(x,p_-)|dx^{\prime}\Big|_{x_1=-R}\times \exp{\Big(\int_{-R}^{x_1} (\phi(s)-C\eps) ds\Big)}. 
\]
Since the periodicity of $\phi$ implies that there exists a positive constant $C$ such that 
\begin{align*}
\begin{aligned}
\int_{-R}^{x_1} \phi(s)ds &\le (x_1+R) \int_{\bbt} \phi(s)ds +C,
\end{aligned}
\end{align*}
we have the desired estimate for the case of $v(x,p_-)$, choosing $\eps $ so that $a_--C\eps \geq a_-/2$. The same arguments also lead to the convergence towards $v(\cdot,p_+)$ as $x_1\to +\infty$.
\end{proof}

\begin{lemma}\label{lem-L1}
Assume $A\in W^{1, \infty}_{loc} (\TT^N\times\RR)^N$ satisfying the assumptions of Theorem \ref{thm:shock-ex}, together with \eqref{p-assume}. Let $\bar{U}$ and $\bar{V}$ be two stationary shock waves connecting $v(\cdot, p_-)$ to $v(\cdot, p_+)$. Then $\bar U$ and $\bar V$ enjoy the following properties:
\begin{itemize}
\item The function $\bar U - \bar V$ keeps a constant sign;
\item $\bar U -\tau_1 \bar U \geq 0$, $\bar V - \tau_1 \bar V \geq 0$;
\item There exist $k_-, k_+\in \bbz$, with either $k_+=0$ or $k_-=0$, such that
$$
\tau_{k_-} \bar U \leq \bar V \leq \tau_{k_+} \bar U;
$$
\item $\bar{U}-\bar{V}\in L^1(\bbr\times\bbt^{N-1})$.
\end{itemize}

\end{lemma}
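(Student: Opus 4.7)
Elliptic regularity gives $\bar U,\bar V\in W^{2,q}_{loc}\cap \mathcal C(\Omega)$ for every $q<\infty$, and the difference $W:=\bar U-\bar V$ solves the linear elliptic equation
\[
-\Delta W+\dv_x(aW)=0\quad\text{in }\Omega,
\]
with $a(x):=\int_0^1\partial_v A(x,\tau\bar U+(1-\tau)\bar V)\,d\tau\in L^\infty(\Omega)$; by \eqref{shock-b}, $W(x_1,\cdot)\to 0$ in $L^\infty(\bbt^{N-1})$ as $x_1\to\pm\infty$. I would prove the four items in the order (1)$\Rightarrow$(2)$\Rightarrow$(3)$\Rightarrow$(4), and (1) is by far the hardest step.

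For (1), I would adapt the uniqueness argument at the end of Section 2.1 to the unbounded setting. On the truncated cylinder $\Omega_R$ the strong form of the Krein--Rutman theorem furnishes a positive eigenfunction $w_R$ of $-\Delta+\dv_x(a\cdot)$ with appropriate co-normal boundary data on $\{x_1=\pm R\}$, and the algebraic identity
\[
-\Delta\!\left(\tfrac{W^2}{w_R}\right)+\dv_x\!\left(a\tfrac{W^2}{w_R}\right)=-2w_R\Big|\nabla\tfrac{W}{w_R}\Big|^2
\]
integrated over $\Omega_R$ produces boundary fluxes at $x_1=\pm R$. The uniform decay $\|W(\pm R,\cdot)\|_{L^\infty(\bbt^{N-1})}\to 0$ as $R\to\infty$, combined with standard interior elliptic estimates on $W$, should make these contributions vanish in the limit, forcing $W/w_R$ to be constant on every compact subset of $\Omega$; since $w_R>0$, this yields constant sign for $W$. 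The subtlety is that the zeroth-order coefficient $\dv_x a$ has no definite sign, so the plain weak maximum principle does not apply; the Krein--Rutman route circumvents this but requires careful uniform control of the eigenfunctions $w_R$ and of the boundary flux terms.

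Granting (1), item (2) is immediate: by periodicity of $A$ and $v(\cdot,p_\pm)$ in $x_1$, $\tau_1\bar U$ is a stationary shock with the same end states, so $H:=\bar U-\tau_1\bar U$ has constant sign by (1). To identify the sign, the telescoping identity
\[
\int_{-M}^{N}\!\!\int_{\bbt^{N-1}} H\,dx'dx_1=\int_{-M}^{-M+1}\!\!\int_{\bbt^{N-1}}\bar U-\int_{N}^{N+1}\!\!\int_{\bbt^{N-1}}\bar U\underset{M,N\to\infty}{\longrightarrow}p_--p_+>0
\]
(using \eqref{shock-b} and $\mean{v(\cdot,p_\pm)}=p_\pm$) rules out the non-positive sign, so $\bar U\geq\tau_1\bar U$; the same applies to $\bar V$. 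Iterating, $\tau_k\bar U$ is decreasing in $k\in\bbz$ with monotone limits $v(\cdot,p_\mp)$ as $k\to\pm\infty$, and in particular $v(\cdot,p_+)\leq\bar U,\bar V\leq v(\cdot,p_-)$.

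For (3), apply (1) to $\bar U,\bar V$: either $\bar U\geq\bar V$ or $\bar U\leq\bar V$. In the former, set $k_+:=0$ and pick any $x^*$ with $\bar V(x^*)>v(x^*,p_+)$ (this point must exist, else $\bar V\equiv v(\cdot,p_+)$, contradicting the limit $\bar V\to v(\cdot,p_-)$ at $-\infty$). Since $\tau_k\bar U(x^*)\downarrow v(x^*,p_+)$, there is $k\in\NN$ with $\tau_k\bar U(x^*)<\bar V(x^*)$; applying (1) to the shocks $\tau_k\bar U$ and $\bar V$ forces $\tau_k\bar U\leq\bar V$ globally, so $k_-:=k$ works. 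The opposite case is symmetric, yielding $k_-=0$ and $k_+\in -\NN$. Finally, (4) follows from (3) via the bound (say in the first case)
\[
0\leq \bar U-\bar V\leq \bar U-\tau_{k_-}\bar U=\sum_{j=0}^{k_--1}(\tau_j\bar U-\tau_{j+1}\bar U),
\]
each summand non-negative by (2); the telescoping computation from Step (2) gives $\int_\Omega(\bar U-\tau_{k_-}\bar U)=k_-(p_--p_+)<\infty$, so $\bar U-\bar V\in L^1(\Omega)$.
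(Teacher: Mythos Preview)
Your steps (2)--(4) are essentially correct and close to the paper's arguments (in places slightly more explicit, e.g.\ your telescoping identity in (2) spelling out why the sign must be nonnegative, and your exact mass computation in (4)).

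The difficulty is step (1). The Krein--Rutman/energy route you propose has a real gap: the boundary flux of $W^2/w_R$ at $x_1=\pm R$ involves $w_R$ in the denominator, and you give no reason why $w_R(\pm R,\cdot)$ stays bounded away from zero uniformly in $R$. In fact, for the closely related problem \eqref{eq:approx-p} the paper later shows (under the Lax conditions) that the positive eigenfunction $p_R$ decays exponentially in $|x_1|$; the same behaviour is expected for your $w_R$, so $W^2/w_R$ at the boundary need not be small even though $W$ is. Moreover, even if the boundary terms did vanish, your conclusion ``$W/w_R$ constant on every compact subset'' is not well-posed: $w_R$ itself varies with $R$, and you have neither a limit eigenfunction nor uniform lower bounds for $w_R$ on compacta to make Fatou-type arguments work.

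The paper avoids all of this by working with $|W|$ directly. From the Kato inequality
\[
-\Delta|W|+\dv_x\bigl(\operatorname{sgn}(W)(A(x,\bar U)-A(x,\bar V))\bigr)=:-m\leq 0
\]
one integrates over $\Omega_R\times\{1\}$ (or against a cut-off in $x_1$); since both shocks share the same end states, the boundary contributions involve only $|W|$, $\partial_1|W|$ and $|A_1(\cdot,\bar U)-A_1(\cdot,\bar V)|\leq C|W|$, all of which tend to zero as $|x_1|\to\infty$. Hence $m(\Omega)=0$, so $|W|$ is a nonnegative solution of $-\Delta|W|+\dv_x(a|W|)=0$, and the Harnack inequality gives the dichotomy $|W|\equiv 0$ or $|W|>0$ everywhere. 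This is both shorter and free of the uniformity issues your scheme faces; I would replace your step (1) with this argument.
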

\begin{remark}\label{rem-su}
The first statement of Lemma \ref{lem-L1} implies that stationary shocks are ordered in the sense that any two shocks $\bar{U}$ and $\bar{V}$ satisfy only one of $\bar{U}=\bar{V}$, $\bar{U}<\bar{V}$ and $\bar{U}>\bar{V}$.
\end{remark}
\begin{proof}
 Let us first prove that  $\bar U - \bar V$ keeps a constant sign. Assume for instance that $\bar{U}(0)\le\bar{V}(0)$.  Using the same argument as the one developed from \eqref{HR}, we have that $W:=\bar{U}-\bar{V}$ satisfies
\[
- \Delta |W| + \dv_x \Big( \sgn(W) \left(A(x,\bar{U}) -A(x,\bar{V}) \right)\Big) \leq 0\quad\mbox{in}~\mathcal{D}'(\bbr\times\bbt^{N-1}).
\]
We denote by $-m$ the left hand-side of the above inequality. Then $m$ is a non-negative measure. But since 
\[
\lim_{x_1\to\pm\infty} \| \bar{U} (x_1,\cdot) - \bar{V} (x_1,\cdot)\|_{L^{\infty}(\bbt^{N-1})}=0,
\]
we have that $m(\bbr\times\bbt^{N-1})=0$. Thus,
\[
- \Delta |W| + \dv_x \Big( \sgn(W) \left(A(x,\bar{U}) -A(x,\bar{V}) \right)\Big) =0.
\]
That is, $|W|$ is a non-negative solution of an elliptic equation of the type:
\[
- \Delta |W| + \dv_x \left(a|W| \right) =0,
\]
where
\[
a:=\int_0^1\partial_vA(x,\tau \bar{U}+ (1-\tau)\bar{V}) d\tau \in L^{\infty}(\bbr\times\bbt^{N-1}).
\] 
As a consequence,  Harnack's inequality implies that either $W$ is identically zero, or $W$ never vanishes. Thus there are two possibilities:
\begin{itemize}
\item If $\bar U(0)=\bar V(0)$, then $W\equiv 0$ and $\bar U=\bar V$;
\item If $\bar U(0)<\bar V(0)$, then $W$ never vanishes and $\bar V-\bar U$ remains strictly positive. In that case
\[
v(x,p_+)<\bar{U}(x)<\bar{V}(x)<v(x,p_-),\quad \forall x\in \bbr\times\bbt^{N-1}.
\]
\end{itemize}
Hence the first statement of the Lemma is proved.

Concerning the second statement, observe that $\tau_1 \bar U$ and $\tau_1 \bar V$ are also stationary shock waves connecting $v(\cdot, p_-)$ to $v(\cdot, p_+)$. As a consequence, according to the first statement, $\bar U- \tau_1 \bar U$ and $\bar V -\tau_1 \bar V$ keep a constant sign. It follows that the sequences of functions $(\tau_k \bar U)_{k\in \bbz}, (\tau_k \bar V)_{k\in \bbz}$ are monotonous, and using assumption \eqref{p-assume}, we infer that these sequences are necessarily non-increasing. Hence $\bar U- \tau_1 \bar U\geq 0$, $\bar V -\tau_1 \bar V\geq 0$.

We now address the third statement. Once again, without loss of generality, we assume that $\bar U\leq \bar V$, so that $k_-=0$. Moreover, since the sequence $(\bar U(ke_1))_{k\in\bbz}$ is monotonous, we have
\[
\tau_k \bar U (x) \to v(x,p_-)\quad \mbox{as}~ k\to -\infty \text{ in } L^\infty([0,1]\times \bbt^{N-1})
\]
Thus there exists $k_+\in\bbz$, $k_+\leq 0$, such that for all $x\in [0,1]\times \bbt^{N-1}$,
\[
\bar{V}(x)\leq\tau_{k_+}\bar U(x).
\]
Using the first statement and the fact that $\tau_{k_+}\bar U$ is a standing shock, we infer that $\bar V \leq \tau_{k_+}\bar U$.

Eventually, still working under the assumption $\bar U\leq \bar V$, we have, for any $K>0$,
\begin{eqnarray*}
\int_{-K}^K\int_{\bbt^{N-1}} |\bar U-\bar V|&\leq& \int_{-K}^K\int_{\bbt^{N-1}}(\tau_{k_+}\bar U-\bar U)\\
&=&\int_{-K+k_+}^{-K}\int_{\bbt^{N-1}}\bar U -\int_{K+k_+}^K\int_{\bbt^{N-1}} \bar U\leq 2 |k_+| \|\bar U\|_\infty,
\end{eqnarray*}
and therefore $\bar U-\bar V \in L^1(\Omega)$.
\end{proof}

\begin{proposition}\label{prop:shock-mass}Assume that the assumptions of Theorem \ref{thm:shock-ex} are satisfied, together with the Lax assumptions  \eqref{Lax-1}-\eqref{Lax-2}. Assume furthermore that $\p_v A, \p_v^2 A \in W^{1,\infty}_{loc} (\bbt^N \times \bbr)$.

Let $\bar{U}$ be a stationary shock wave connecting $v(\cdot, p_-)$ to $v(\cdot, p_+)$.
Let $q\in \RR$ be arbitrary. Then there exists a unique shock $\bar V \in \bar U + L^1( \bbr\times\bbt^{N-1})$ such that $\int_{\bbr\times \bbt^{N-1}} (\bar V - \bar U)=q$.
\end{proposition}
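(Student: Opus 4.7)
\emph{Uniqueness.} If $\bar V_1, \bar V_2 \in \bar U + L^1(\Omega)$ are two shocks with $\int_\Omega (\bar V_i - \bar U)\,dx = q$, then $\bar V_1 - \bar V_2 \in L^1(\Omega)$ has zero integral. By Lemma \ref{lem-L1}, $\bar V_1 - \bar V_2$ keeps a constant sign, hence vanishes a.e., so $\bar V_1 = \bar V_2$.

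\emph{Existence; reduction via translates.} Each $\tau_k \bar U$ ($k \in \bbz$) is again a stationary shock, and $\tau_{-1} \bar U \geq \bar U$ by Lemma \ref{lem-L1}. A telescoping computation, using the $L^\infty(\bbt^{N-1})$-convergence $\bar U(x_1,\cdot) \to v(\cdot, p_\pm)$ as $x_1 \to \pm\infty$ together with $\mean{v(\cdot, p_\pm)} = p_\pm$, yields
\[
\int_\Omega (\tau_k \bar U - \bar U)\,dx = k(p_+ - p_-) \qquad \text{for all } k \in \bbz,
\]
the integrability being ensured by Lemma \ref{lem-L1}. Replacing $\bar U$ by a suitable $\tau_k \bar U$, it therefore suffices to find, for any prescribed $q \in [0, p_- - p_+)$, a shock $\bar V$ with $\int_\Omega (\bar V - \bar U)\,dx = q$; Lemma \ref{lem-L1} will automatically force $\bar U \leq \bar V \leq \tau_{-1} \bar U$.

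\emph{An interpolated approximation scheme.} For $R > 1$ and $t \in [0,1]$, let $\bar V_R^t$ be the unique solution of
\begin{align*}
-\Delta \bar V_R^t + \dv A(x, \bar V_R^t) &= 0 \quad \text{in } \Omega_R,\\
\bar V_R^t(\pm R, x') &= (1-t)\,\bar U(\pm R, x') + t\,\tau_{-1} \bar U(\pm R, x'),
\end{align*}
whose existence and uniqueness follow from exactly the Schaeffer fixed-point and Krein--Rutman arguments of Section \ref{sec:shock-ex}. Since $\bar U \leq \tau_{-1} \bar U$ on $\partial \Omega_R$ and both functions solve the PDE, the comparison principle (of the type used in step (3) of Lemma \ref{lem:UR}) gives $\bar V_R^0 = \bar U|_{\Omega_R}$, $\bar V_R^1 = \tau_{-1} \bar U|_{\Omega_R}$, and the monotonicity $\bar U \leq \bar V_R^{t_1} \leq \bar V_R^{t_2} \leq \tau_{-1} \bar U$ in $\Omega_R$ whenever $t_1 \leq t_2$. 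Consequently $M_R(t) := \int_{\Omega_R} (\bar V_R^t - \bar U)\,dx$ is continuous and nondecreasing, with $M_R(0) = 0$ and $M_R(1) = \int_{\Omega_R} (\tau_{-1} \bar U - \bar U)\,dx \to p_- - p_+$ as $R \to \infty$ (by dominated convergence, using $\tau_{-1} \bar U - \bar U \in L^1(\Omega)$ from Lemma \ref{lem-L1}). For $R$ large enough, the intermediate value theorem therefore provides some $t_R \in [0,1]$ with $M_R(t_R) = q$.

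\emph{Passage to the limit.} Using the $L^\infty$-sandwich $\bar U \leq \bar V_R^{t_R} \leq \tau_{-1} \bar U$ (in place of the $L^\infty$ bound of Lemma \ref{lem:UR}) together with the interior elliptic estimates of that lemma, I extract a subsequence $R_m \to \infty$ along which $t_{R_m} \to t^*$ and $\bar V_{R_m}^{t_{R_m}} \to \bar V$ locally in $W^{2,q}$ for every $q < \infty$. The limit $\bar V$ solves the elliptic equation on all of $\Omega$, the asymptotic behavior $\bar V \to v(\cdot, p_\pm)$ at $\pm \infty$ in $L^\infty(\bbt^{N-1})$ is inherited from the squeeze by $\bar U$ and $\tau_{-1} \bar U$, and $0 \leq \bar V - \bar U \leq \tau_{-1} \bar U - \bar U \in L^1(\Omega)$ gives $\bar V \in \bar U + L^1(\Omega)$. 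Finally, extending $\bar V_{R_m}^{t_{R_m}} - \bar U$ by $0$ outside $\Omega_{R_m}$, the resulting sequence is dominated by $\tau_{-1} \bar U - \bar U \in L^1(\Omega)$ and converges pointwise to $\bar V - \bar U$, so dominated convergence passes $M_{R_m}(t_{R_m}) = q$ to the limit, yielding $\int_\Omega (\bar V - \bar U)\,dx = q$. The main obstacle is thus the translate mass formula (which uses the decay of $\bar U - v(\cdot, p_\pm)$ at $\pm\infty$ from Proposition \ref{prop-exp}) and the verification that the interpolated approximation scheme still enjoys the uniqueness and comparison properties of Section \ref{sec:shock-ex}; everything else reduces to a careful application of the tools already developed there.
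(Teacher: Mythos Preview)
Your proof is correct and takes a genuinely different route from the paper's. The paper works with the unknown $W=\bar V-\bar U$ and sets up an approximate problem on $\Omega_R$ with \emph{Neumann-type} boundary conditions $-\partial_1 W_R+b_{k,1}W_R=0$ together with a mass constraint $\int_{\Omega_R}W_R=q$. Solving this requires the Fredholm alternative (hence the auxiliary Krein--Rutman function $p_{k,R}$ of Lemma~\ref{lem:p}) plus Schaefer's theorem, and the passage to the limit hinges on the $L^1$-convergence $p_{k,R}\to p_k$, which in turn relies on the Lax conditions \eqref{Lax-1}--\eqref{Lax-2} to obtain equi-integrability. By contrast, you stay with the \emph{Dirichlet} problem of Section~\ref{sec:shock-ex}, interpolate the boundary data between $\bar U$ and $\tau_{-1}\bar U$, and hit the desired mass by the intermediate value theorem; the sandwich $\bar U\le \bar V_R^{t_R}\le \tau_{-1}\bar U$ then gives equi-integrability for free since $\tau_{-1}\bar U-\bar U\in L^1(\Omega)$ by Lemma~\ref{lem-L1}. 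The comparison principle you invoke is justified: writing $H=\bar V_R^{t_2}-\bar V_R^{t_1}$, one has $-\Delta H+\dv(aH)=0$, and setting $g=H/w$ with $w>0$ the Krein--Rutman function of Lemma~\ref{lem-app}, a direct computation gives $-\Delta g+\tilde b\cdot\nabla g=0$, so the classical strong maximum principle yields $g\ge 0$ from $g_{|\partial\Omega_R}\ge 0$.

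Two remarks. First, your approach does not actually use the Lax conditions, nor the extra regularity $\partial_v^2 A\in W^{1,\infty}_{loc}$: this answers affirmatively the question raised in the remark following Proposition~\ref{prop:shock-mass}. Second, your parenthetical appeal to Proposition~\ref{prop-exp} for the translate mass formula $\int_\Omega(\tau_k\bar U-\bar U)=k(p_+-p_-)$ is unnecessary: the computation
\[
\int_{-L}^{L}\!\!\int_{\bbt^{N-1}}\!\!\big(\bar U(x_1+k,x')-\bar U(x_1,x')\big)\,dx'\,dx_1
=\int_{L}^{L+k}\!\!\int_{\bbt^{N-1}}\!\bar U-\int_{-L}^{-L+k}\!\!\int_{\bbt^{N-1}}\!\bar U
\]
passes to the limit using only the $L^\infty(\bbt^{N-1})$-convergence $\bar U(x_1,\cdot)\to v(x_1,\cdot,p_\pm)$ from Definition~\ref{def:shock} and the periodicity of $v$.
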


\begin{remark}
The sole purpose of  assumptions \eqref{Lax-1}-\eqref{Lax-2} is to ensure that the family $(p_R)_{R>0}$ defined by \eqref{eq:approx-p} below is equi-integrable, and therefore compact with respect to $R$. If this compactness property can be retrieved in another way, then assumptions \eqref{Lax-1}-\eqref{Lax-2} can be removed from the statement of  Proposition \ref{prop:shock-mass}. 
\end{remark}

Proposition \ref{prop:shock-mass} has the following immediate consequence:
\begin{cor}\label{cor-zero}
Let $p_-,p_+$ be constants such that the assumptions of Theorem \ref{thm:shock-ex} are satisfied. Assume that  \eqref{Lax-1}-\eqref{Lax-2} hold,  and let $\bar{U}$ be a stationary shock wave connecting $v(\cdot, p_-)$ to $v(\cdot, p_+)$. If $u\in \bar{U} + L^1( \bbr\times\bbt^{N-1})$, then there exists a unique standing shock $\bar V$ such that $u\in \bar V + L^1_0( \bbr\times\bbt^{N-1})$.
\end{cor}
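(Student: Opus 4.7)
The plan is to reduce the corollary to Proposition~\ref{prop:shock-mass} by choosing the mass parameter appropriately. Since $u - \bar U \in L^1(\bbr \times \bbt^{N-1})$, the quantity
\[
q := \int_{\bbr \times \bbt^{N-1}} (u - \bar U)
\]
is a well-defined real number. Proposition~\ref{prop:shock-mass} then produces a unique stationary shock $\bar V$ connecting $v(\cdot, p_-)$ to $v(\cdot, p_+)$ such that $\bar V - \bar U \in L^1(\bbr \times \bbt^{N-1})$ and $\int (\bar V - \bar U) = q$. Writing $u - \bar V = (u - \bar U) - (\bar V - \bar U)$ expresses $u - \bar V$ as a difference of two $L^1$ functions with equal integrals; hence $u - \bar V \in L^1_0(\bbr \times \bbt^{N-1})$, establishing existence.

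For uniqueness, I would take another standing shock $\bar V'$ with $u - \bar V' \in L^1_0(\bbr \times \bbt^{N-1})$ and show that $\bar V' = \bar V$. First, one must check that $\bar V'$ shares the end states of $\bar U$: the difference $\bar V' - \bar U = (u - \bar U) - (u - \bar V')$ lies in $L^1(\bbr \times \bbt^{N-1})$, so if the end states of $\bar V'$ were $v(\cdot, p_\pm')$ with $p_\pm' \neq p_\pm$, the strict monotonicity of $p \mapsto v(\cdot, p)$ from Proposition~\ref{prop:v-properties} would give nonzero limits of $\bar V' - \bar U$ in $L^\infty(\bbt^{N-1})$ as $x_1 \to \pm \infty$, contradicting $L^1$ integrability. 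Once the end states agree, $\bar V - \bar V' = (u - \bar V') - (u - \bar V)$ belongs to $L^1$ with zero integral, and applying Proposition~\ref{prop:shock-mass} a second time (with reference shock $\bar V$ and $q=0$, for which $\bar V$ itself is manifestly the unique solution) forces $\bar V' = \bar V$.

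I do not anticipate any real obstacle, as the substantive work is entirely packaged inside Proposition~\ref{prop:shock-mass}: the corollary merely recasts its existence and uniqueness statement from a \emph{relative mass} formulation (prescribing $\int(\bar V - \bar U)$) into an \emph{absolute mass} formulation (prescribing $\int(u - \bar V) = 0$ for a given reference profile $u$). The only mildly delicate verification is the matching of end states in the uniqueness argument, which is a direct consequence of $L^1$ integrability combined with the strict monotonicity of the family $(v(\cdot, p))_{p \in \RR}$.
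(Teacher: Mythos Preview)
Your argument is correct and matches the paper's approach exactly: the paper states the corollary as an ``immediate consequence'' of Proposition~\ref{prop:shock-mass} and gives no separate proof, so your derivation---defining $q=\int(u-\bar U)$, invoking Proposition~\ref{prop:shock-mass}, and unwinding the mass condition---is precisely what is intended. Your additional care in the uniqueness step (verifying that any competing $\bar V'$ must share the end states $p_\pm$ via $L^1$-integrability of $\bar V'-\bar U$ and the strict monotonicity of $p\mapsto v(\cdot,p)$) goes slightly beyond what the paper spells out, but it is correct and closes a point the paper leaves implicit.
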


We now turn to the proof of Proposition \ref{prop:shock-mass}. The proof relies heavily on properties of the function $\bar U(\cdot + e_1)-\bar U$, which we list in the following Lemma:
\begin{lem}\label{lem:p}
Assume the hypotheses of Theorem \ref{thm:shock-ex}, together with \eqref{p-assume}, futhermore $\p_v A \in W^{1,\infty}_{loc}(\bbt^N\times \bbr)$. Let $\bar{U}$ be a stationary shock wave connecting $v(\cdot, p_-)$ to $v(\cdot, p_+)$, and let
$$
p:= \bar U - \bar U(\cdot + e_1).
$$
Then $p$ satisfies the following properties:
\begin{itemize}
\item  Setting 
$$
b(x)=\int_0^1 \partial_v A(x, s\bar U(x) + (1-s)\bar U(x+e_1))\:ds\in{W^{1,\infty}(\Omega)},
$$
the function $p$ is a non-negative solution of
\be\label{eq:p}
-\Delta p + \dv (bp)=0\quad \text{in } \bbr\times\bbt^{N-1};
\ee
Moreover, $p\in L^1\cap W^{1,\infty}(\bbr\times \bbt^{N-1})$.

%\item For any stationary shock $\bar V\in L^\infty(\Omega)$ with end states $v(\cdot, p_\pm)$, there exists a constant $C$ such that
%$$
%C^{-1}p\leq |\bar V - \bar U| \leq C p;
%$$
\item For any $R>1$, consider the approximate problem
\be\label{eq:approx-p}\ba
- \Delta p_R + \dv(b p_R)=0\quad \text{in } (-R,R)\times \bbt^{N-1},\\
-\p_1 p_R + b_1 p_R=0 \text{ at } x_1=\pm R,\\
\int_{\Omega_R} p_R=\int_{\Omega} p.
\ea
\ee
Then equation \eqref{eq:approx-p} has a unique solution  $p_R\in H^1(\Omega_R)$. Moreover, $p_R(x)>0$ for all $x\in \Omega_R$, and if $p_R$ is extended by zero outside $\Omega_R$, the family $(p_R)_{R>0}$ is uniformly bounded in $L^q(\Omega)$ for all $1\leq q<\infty$.

\item Assume that the Lax conditions \eqref{Lax-1}-\eqref{Lax-2} are satisfied. Then
$$
p_R\to p\quad \text{as } R\to \infty, \text{ in } L^1(\bbr\times \bbt^{N-1}).
$$

\end{itemize}

\end{lem}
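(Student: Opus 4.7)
The plan is to handle the three items sequentially. For the first item, since $A$ is periodic in $x_1$ the translate $\bar U(\cdot + e_1)$ solves the same stationary equation as $\bar U$; I would subtract the two equations and express the flux difference via an integral along the segment joining $\bar U(x)$ and $\bar U(x+e_1)$ to obtain \eqref{eq:p} with $b$ as given. The hypothesis $\partial_v A \in W^{1,\infty}_{loc}$ together with the $L^\infty$ bound $v(\cdot,p_+) \leq \bar U \leq v(\cdot,p_-)$ yields $b \in W^{1,\infty}(\Omega)$. Non-negativity of $p$ is precisely the monotonicity in the second item of Lemma \ref{lem-L1}, and integrability of $p$ follows from the fourth item applied to the pair $(\bar U, \tau_1 \bar U)$, since $\tau_1\bar U$ is itself a stationary shock with the same end states. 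The $W^{1,\infty}$ bound on $p$ follows from the corresponding bound on $\bar U$ established in the second step of the proof of Theorem \ref{thm:shock-ex}.

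For the second item, I view \eqref{eq:approx-p} as a linear elliptic problem with a zero-flux (conservative Robin) boundary condition on $\{x_1 = \pm R\}$, which is the natural boundary condition for the conservative operator $-\Delta + \dv(b\,\cdot)$. The strong Krein-Rutman theorem, exactly as invoked earlier in the paper for the uniqueness of $\bar U_R$, produces a unique positive $H^1$ solution up to multiplicative scaling, and the normalization $\int_{\Omega_R} p_R = \int_\Omega p$ uniquely fixes this scaling; strict positivity then follows from Harnack's inequality. For the uniform $L^q$ bounds, the key observation is that $b$ is bounded in $L^\infty(\Omega)$ independently of $R$, so standard interior and boundary Moser iteration applied to \eqref{eq:approx-p} yields
\[
\|p_R\|_{L^\infty((k,k+1)\times \bbt^{N-1})} \leq C\, \|p_R\|_{L^1((k-1,k+2)\times \bbt^{N-1})}
\]
with $C$ independent of $R$ and $k$. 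Since the normalization bounds $\|p_R\|_{L^1(\Omega_R)}$ uniformly, this gives a uniform $L^\infty$ bound on $p_R$, and interpolation against the fixed $L^1$ mass produces the uniform $L^q$ bounds for all $q \in [1,\infty)$.

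The main obstacle is the third item. Bootstrapping the $L^\infty$ bounds via elliptic regularity gives uniform $W^{2,q}_{loc}$ estimates on $p_R$, allowing one to extract a subsequence $p_{R_m}$ converging in $L^q_{loc}$ and weakly in $W^{2,q}_{loc}$ to some non-negative $\tilde p$ solving $-\Delta \tilde p + \dv(b \tilde p) = 0$ on $\Omega$. The difficulty is to rule out loss of mass at infinity, and it is here that the Lax conditions \eqref{Lax-1}--\eqref{Lax-2} enter. Indeed, Proposition \ref{prop-exp} gives exponential decay of $\bar U - v(\cdot, p_\pm)$ at $\pm \infty$ with rates governed by $a_\pm$, hence the same decay for $p$. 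To transfer this decay to $p_R$ uniformly in $R$, I would integrate \eqref{eq:approx-p} over $\bbt^{N-1}$ and use the Robin boundary condition (which forces the integration constant to vanish) to obtain, for $\eta_R(x_1) := \int_{\bbt^{N-1}} p_R(x_1, x')\,dx'$, the identity
\[
\eta_R'(x_1) = \int_{\bbt^{N-1}} b_1(x_1, x') p_R(x_1, x')\,dx';
\]
combining this with the asymptotics $b(x_1, x') \to \partial_v A(x_1, x', v(x_1, x', p_\pm))$ as $x_1 \to \pm\infty$ (inherited from the convergence of $\bar U$ to $v(\cdot,p_\pm)$) and the averaged Lax inequalities, one derives a differential inequality $\pm \eta_R'(x_1) \leq (a_\pm + o(1)) \eta_R(x_1)$ valid for $\pm x_1 \geq M$ independently of $R$, hence uniform exponential decay of $\eta_R$ and, through the local $L^\infty$ bounds, uniform pointwise exponential decay of $p_R$. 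This delivers the equi-integrability of $(p_R)_R$, and consequently $p_{R_m} \to \tilde p$ strongly in $L^1(\Omega)$.

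To conclude, I would identify $\tilde p$ with $p$. Both are positive $L^1$ solutions of the same PDE on $\Omega$ (positivity of $p$ follows from Harnack applied to $p \geq 0$, using that $p \not\equiv 0$ as $p_+ \neq p_-$). Applying the classical identity
\[
-\Delta\!\left(\frac{\tilde p^{\,2}}{p}\right) + \dv\!\left(b\, \frac{\tilde p^{\,2}}{p}\right) = -2p\left|\nabla\frac{\tilde p}{p}\right|^2,
\]
which generalizes the one used earlier in the paper for the uniqueness of $\bar U_R$, and integrating over $\Omega$ with boundary terms at infinity controlled by the common exponential decay of $p$ and $\tilde p$, we obtain $\nabla(\tilde p / p) \equiv 0$, hence $\tilde p = \lambda p$ for some $\lambda \geq 0$. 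The normalization $\int_\Omega p_R = \int_\Omega p$ then passes to the limit by equi-integrability and forces $\lambda = 1$. The uniqueness of the limit gives convergence of the full family, completing the proof.
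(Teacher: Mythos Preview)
Your argument tracks the paper's proof closely in structure. Two points deserve comment.

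For the second item, the paper takes a different route to the uniform $L^q$ bounds: it multiplies \eqref{eq:approx-p} by $p_R$, integrates by parts, controls the boundary traces via a trace inequality combined with Gagliardo--Nirenberg interpolation against the fixed $L^1$ mass, and thereby obtains a uniform $H^1(\Omega_R)$ bound; one further Gagliardo--Nirenberg interpolation between $L^1$ and $\dot H^1$ then delivers the $L^q$ bounds for all finite $q$. Your Moser-iteration approach is valid and actually yields a uniform $L^\infty$ bound, which is stronger; but note that the paper reuses its $H^1$ estimate in the third item to get local $L^1$ compactness, so with your approach you must recover compactness separately (as you do, by bootstrapping to $W^{2,q}_{loc}$). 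Also, your differential inequality ``$\pm\eta_R' \le (a_\pm + o(1))\eta_R$'' is stated a bit loosely: what one actually gets is $\eta_R' \le (\psi(x_1)+\eps)\eta_R$ for $x_1>K$ and $\eta_R' \ge (\phi(x_1)-\eps)\eta_R$ for $x_1<-K$, with $a_\pm$ entering only after integrating over a period. This is how the paper argues.

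The one genuine gap is in your identification $\tilde p = p$. The paper simply asserts ``uniqueness (up to a multiplicative constant) of the solutions of \eqref{eq:p}'' without proof; you attempt to justify it via the identity
\[
-\Delta\!\Big(\frac{\tilde p^{\,2}}{p}\Big) + \dv\!\Big(b\,\frac{\tilde p^{\,2}}{p}\Big) = -2p\Big|\nabla\frac{\tilde p}{p}\Big|^2,
\]
integrated over $\Omega$, claiming the boundary terms at infinity are controlled by ``common exponential decay''. But you have only established \emph{upper} exponential bounds on $p$ and $\tilde p$; without a compatible \emph{lower} bound on $p$, the quotient $\tilde p/p$ --- and hence $\tilde p^{\,2}/p$ --- need not decay. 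Even if one derives the two-sided bounds the paper records (say $C^{-1}e^{b_+x_1/2}\le p \le Ce^{a_+x_1/2}$ for $x_1>0$, with $b_+\le a_+<0$), the resulting estimate $\tilde p^{\,2}/p \le C e^{(a_+-b_+/2)x_1}$ decays only if $2a_+<b_+$, which is not implied by the Lax hypotheses. A safer route, in the spirit of Lemma~\ref{lem-L1}, is to show that $|p-\tilde p|$ is itself a nonnegative solution of \eqref{eq:p}: the defect measure in $-\Delta|p-\tilde p|+\dv(b|p-\tilde p|)\le 0$ has total mass equal to the total variation of the nonincreasing function $x_1\mapsto -\partial_{x_1}\!\int_{\bbt^{N-1}}|p-\tilde p|\,dx' + \int_{\bbt^{N-1}} b_1|p-\tilde p|\,dx'$, and both endpoint limits of this function vanish using only the upper exponential decay of $p,\tilde p$ (and of their derivatives, via elliptic regularity). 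Harnack's inequality then forces $|p-\tilde p|\equiv 0$ or $|p-\tilde p|>0$ everywhere; the latter makes $p-\tilde p$ of constant sign, contradicting $\int_\Omega(p-\tilde p)=0$.
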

\begin{proof}
$\bullet$ \textit{Properties of $p$: } the integrability, sign  and regularity properties of $p$ follow from Lemma \ref{lem-L1} and from the regularity properties of $\bar U$. The equation on $p$ simply follows from making the difference between the equations on $\tau_1 \bar U$ and $\bar U$.

$\bullet$ \textit{Properties of $p_R$:} existence, uniqueness and positivity are a consequence of the Krein-Rutman theorem (see Appendix). 
The uniform $L^1$ bound follows from the normalization and  the positivity. We then obtain $H^1$ bounds by multiplying \eqref{eq:approx-p} by $p_R$ and integrating by parts. We obtain
\begin{eqnarray*}
&&\int_{\Omega_R} | \na p_R|^2 \leq\frac{1}{2} \|b\|_{W^{1,\infty}} \left(\int_{\bbt^{N-1}}p_R^2(R, x')\:dx' + \int_{\bbt^{N-1}}p_R^2(-R, x')\:dx' +\int_{\Omega_R} p_R^2 \right).
\end{eqnarray*}
Using first a trace inequality and then the Gagliardo-Nirenberg interpolation inequality, we infer that for all $\nu>0$, there exists a constant $C_\nu$, independent of $R$, and such that
\begin{eqnarray*}
\|p_R(\pm R, \cdot)\|_{L^2(\bbt^{N-1})}&\leq &C \|p_R\|_{H^{1/2}(\Omega_R)}\leq C_\nu \|p_R\|_{L^2} + \nu \|\na p_R\|_{L^2}\\
&\leq & C_\nu \|p_R\|_{L^1}^{1-\alpha}\|\na p_R\|_{L^2}^{\alpha} + \nu  \|\na p_R\|_{L^2}\\
&\leq & C_\nu \|p_R\|_{L^1} + 2 \nu \|\na p_R\|_{L^2},
\end{eqnarray*}
where $\alpha=N/(N+2)$. Taking $\nu$ sufficiently small, we infer that
$$
\int_{\Omega_R}| \na p_R|^2 \leq C  \|b\|_{W^{1,\infty}} \|p_R\|_{L^1}^2 \leq C.
$$
Using once again the uniform $L^1$ bound together with the Gagliardo-Nirenberg interpolation inequality, we obtain
$$
\sup_{R>0}\| p_R\|_{H^1(\Omega_R)}<\infty.
$$

%Following the same type of argument, we can also multiply \eqref{eq:approx-p} by $p_R^{q}$ for any $q\geq 1$. By induction on $q$, we deduce that $p_R$ is bounded in $L^q(\Omega_R)$ uniformly in $R$ for any $q<\infty$.

Likewise, we have  uniform $L^q$ bounds, i.e., for any $1<q<\infty$, 
\[
\|p_R\|_{L^q(\Omega_R)} \le \|p_R\|_{L^1}^{1-\alpha'}\|\na p_R\|_{L^2}^{\alpha'}\le C, \quad \alpha'=\frac{2N(q-1)}{(N+2)q}\in (0,1).
\]
 
 $\bullet$ \textit{Asymptotic behaviour of $p_R$ when the Lax conditions are satisfied:}
we first obtain estimates on the rate  of decay in $x_1$ in the following way. Integrating equation \eqref{eq:approx-p} on $\bbt^{N-1}$ leads to
 $$
 - \frac{d^2}{dx_1^2}\int_{\bbt^{N-1}} p_R + \frac{d}{dx_1}\int_{\bbt^{N-1}} b_1 p_R=0,
 $$
 and thus
 $$
 - \frac{d}{dx_1}\int_{\bbt^{N-1}} p_R + \int_{\bbt^{N-1}} b_1 p_R=\text{cst. on } [-R,R].
 $$
 The boundary conditions imply that the constant has to be zero, and therefore
 $$- \frac{d}{dx_1}\int_{\bbt^{N-1}} p_R + \int_{\bbt^{N-1}} b_1 p_R=0\text{  on } [-R,R].$$
 Now, since Proposition \ref{prop-exp} yields that for all $\pm x_1>R$,
 \begin{eqnarray*}
\int_{\bbt^{N-1}} |p| \le \int_{\bbt^{N-1}} |\bar{U}(x)- v(x,p_{\pm})|+\int_{\bbt^{N-1}}|v(x+e_1,p_{\pm})-\bar U(x+e_1)| < C_R\, e^{a_{\pm}x_1/2},
\end{eqnarray*}
there exists $x'_0\in\bbt^{N-1}$ such that $p(x_1,x'_0)\le C_R\, e^{a_{\pm}x_1/2}$ for all $\pm x_1>R$, which implies together with Harnack inequality that $p(x_1, \cdot)$ converges exponentially fast towards zero in $L^\infty(\bbt^{N-1})$ as $x_1\to \pm\infty$, and $b_1$ therefore converges exponentially fast towards
 $
 \p_v A(x, v(x,p_\pm))
 $ in $L^\infty(\bbt^{N-1})$ as $x_1\to \pm\infty$. The Lax conditions \eqref{Lax-1}-\eqref{Lax-2} imply that for any $\eps<\max(|a_+|, |a_-|)/2$, there exists $K>0$ such that
 $$
b_1(x)\geq\phi(x_1)-\eps\quad\text{for } x_1<-K,\quad b_1(x)\leq \psi(x_1) + \eps\quad\text{for } x_1>K.
 $$
 Thus, if $K<x_1<R$, we obtain, since $p_R>0$,
 $$
 0 \leq - \frac{d}{dx_1}\int_{\bbt^{N-1}} p_R + (\psi(x_1) + \eps)\int_{\bbt^{N-1}} p_R .
 $$
 As a consequence, there exists a constant $C$ such that
 $$
 \int_{\bbt^{N-1}} p_R(x_1, x')\:dx'\leq C \exp(x_1 a_+/2).
 $$
We also obtain similar estimates on $(-R,-K)$. 
 Using the Harnack inequality, we deduce eventually that there exists a constant $C$ (independent of $R$) such that
 $$
 C^{-1}\exp(a_- x_1/2)\leq p_R(x)\quad \text{if } x_1<0,\quad p_R(x)\leq C\exp(a_+ x_1/2)\quad \text{if } x_1>0.
 $$
Furthermore, if we consider
\[
\varphi_+:= \inf_{x'\in\bbt^{N-1}}\partial_v A_1(x_1,x',v(x_1,x',p_+)),\quad  \varphi_-:= \sup_{x'\in\bbt^{N-1}}\partial_v A_1(x_1,x',v(x_1,x',p_-)) ,
\]
then \eqref{Lax-1}-\eqref{Lax-2} imply that $b_+:= \int_{\bbt} \varphi_+ dx_1<0$ and $b_-:= \int_{\bbt} \varphi_- dx_1>0$, and using the above arguments, we obtain that 
 $$
 p_R(x)\le C\exp(b_- x_1/2) \quad \text{if } x_1<0,\quad C^{-1}\exp(b_+ x_1/2)\le p_R(x)\quad \text{if } x_1>0.
 $$
Hence, the sequence $(p_R)_{R>1}$ is equi-integrable. Using the uniform $H^1$ estimate, we deduce that $(p_R)_{R>1}$ is compact in $L^1(\Omega)$. By uniqueness (up to a multiplicative constant) of the solutions of \eqref{eq:p}, it follows that $p_R\to p$ in $L^1(\Omega)$.

\end{proof}

\begin{remark}
Obviously, the same statements hold for $p_k:=|\bar U(\cdot +k e_1) - \bar U|$ for any $k\in \bbz$, replacing every occurrence of $\bar U(\cdot + e_1)$ by $\bar U(\cdot +k e_1)$.
\end{remark}

We are now ready to prove Proposition \ref{prop:shock-mass}:
\subsection{Proof of Proposition \ref{prop:shock-mass}}
Let $q\in \bbr$ be fixed. Notice first that if there exist two shocks $\bar V_1, \bar V_2$ with $\int(\bar V_1-\bar U)=\int(\bar V_2-\bar U)=q$, then $\bar V_1-\bar V_2\in L^1_0(\Omega)$ and $\bar V_1-\bar V_2$ keeps a constant sign according to Lemma \ref{lem-L1}. Hence $\bar V_1=\bar V_2$. The uniqueness of $\bar V$ follows. We therefore focus on the existence of $\bar V$ in the rest of the proof.

First, there exists an integer $k\in \bbz$ such that $q$ has the same sign as $\bar U(\cdot + k e_1)- \bar U$, and 
$$
|q|\leq \| \bar U(\cdot + k e_1)- \bar U\|_{L^1}= \|p_k\|_{L^1}.
$$
In order to fix ideas, we work with $q>0$, so that $k<0$ and $p_k= \bar U(\cdot + k e_1)- \bar U$. In the sequel, we set
$$
b_k(x)=\int_0^1 \partial_v A(x, (1-s)\bar U(x) + s\bar U(x+ke_1))\:ds\in W^{1,\infty}(\Omega).
$$

The goal is to prove that for all  $q\in \bbr$, the following equation has at least one solution
\begin{equation}\label{eq:W}
-\Delta W + \dv B(x,W)=0\quad \text{in }\Omega, \quad W\in L^1(\Omega),\  \int_{\Omega} W=q,
\end{equation}
where $B(x,r)=  A(x, \bar U + r) -  A(x, \bar U)$. Setting $W=\bar V - \bar U$, this is strictly equivalent to the statement of Proposition \ref{prop:shock-mass}.

 In order to require that $\int_{\Omega} W =q$, we slightly modify the form of equation \eqref{eq:W} and rather look for solutions of the equation
\begin{equation}
\label{eq:W-2}
-\Delta W + \dv(b_kW) + \dv \tilde B_k(x, W)=0,\quad W \in  L^1(\Omega),\ \int_{\Omega} W=q
\end{equation}
where $\tilde B_k(x,r)= \tilde A(x, \bar U + r) - \tilde A(x, \bar U) - b_k (x)r$.
 Here, $\tilde A$ is defined by $\tilde A(x,r):=A(x,r)\chi(r)$, where $\chi\in \mathcal C^\infty_0(\RR)$ such that $\chi(r)= 1$ for $|r|\leq r_0$ 
for some large constant $r_0$ with $r_0>2\| v(\cdot, p_\pm)\|_\infty$, thus $\tilde A\in W^{1,\infty}(\TT^{N}\times\bbr)$. It is clear that if $W\in L^1$ is a solution of \eqref{eq:W-2}, then $\bar V = W + \bar U$ is a standing shock for the flux $\tilde A $, with periodic end states $v(\cdot,p_\pm)$. As a consequence, $v(\cdot, p_+)\leq\bar V\leq v(\cdot, p_-)$, and thus $\tilde A(x, \bar V(x))= A(x, \bar V(x))$. Whence $\bar V$ is a standing shock for the flux $A$ such that $\int (\bar V - \bar U)= q$. 

Notice also that there exists a constant $C$ such that
$$
|\tilde B_k(x,r)| \leq C |r| \quad \forall r\in \bbr,\ \forall x\in \Omega,
$$
and that for all  $r\in \bbr, x\in \Omega$, since $\p_v^2 A \in L^\infty_{loc}$,
\begin{eqnarray*}
\tilde B_k(x,r)&=&r\int_0^1 \left( \p_v \tilde A(x, \bar U + s r) - \p_v  \tilde A(x, \bar U + s p_k(x))\right)\:ds\\
&=&r (r-p_k) \int_0^1\int_0^1s\p_v^2 \tilde A (x , \bar U + s\tau r + s(1-\tau ) p_k(x))\:d\tau\:ds.
\end{eqnarray*}
As a consequence,  for all  $r\in \bbr, x\in \Omega$,
\beq\label{bound-1}
|\tilde B_k(x,r)| \leq C |r| |r-p_k(x)|.
\eeq

We prove the existence of solutions of \eqref{eq:W-2} by using approximate problems on $\Omega_R$ and passing to the limit as $R\to \infty$. Using Lemma \ref{lem:p}, we first introduce the function $p_{k,R}$ which solves
\begin{align}
\begin{aligned}\label{p_kR}
&-\Delta p_{k,R} + \dv (b_k p_{k,R})=0\text{ in } \Omega_R,\\  
&- \p_1 p_{k,R} + b_{k, 1} p_{k,R}=0 \text{ for } x_1=\pm R,
& \int_{\Omega_{R}} p_{k,R}=\int_\Omega p_k.
\end{aligned}
\end{align}
We recall that $p_{k,R}>0$ in $\Omega_R$. We define
$$
 \tilde B_{k,R}(x,r)=\chi_R(x_1) r (r-p_{k,R}) \int_0^1\int_0^1s\p_v^2 \tilde A (x , \bar U + s\tau r + s(1-\tau ) p_k(x))\:d\tau\:ds
$$
for some cut-off function $\chi_R$ such that $\chi_R\equiv 1$ on $(-R+1, R-1)$ and $\mathrm{Supp}\; \chi_R\subset (-R+1/2, R-1/2)$.

We now prove that for all $R>1$, there exists a solution $W_R\in H^1(\Omega_R)$ of the equation
\begin{equation}
\label{eq:W-approx}\ba
-\Delta W_R + \dv(b_kW_R) + \dv \tilde B_{k,R}(x, W_R)=0\text{ in } \Omega_R,\\
- \p_1 W_R + b_{k, 1} W_R=0 \text{ for } x_1=\pm R,\\
\int_{\Omega_R} W_R=q.\ea
\end{equation}

Let us  solve  equation \eqref{eq:W-approx} by using Schaefer's fixed point theorem. Let $W_1\in H^1(\Omega_R)$ be arbitrary. We  use the Fredholm alternative to solve the equation
\beq\label{Fred-eq}
\ba
-\Delta W_2 + \dv(b_kW_2) + \dv \tilde B_{k,R}(x, W_1)=0\text{ in } \Omega_R,\\
- \p_1 W_2 + b_{k, 1} W_2=0 \text{ for } x_1=\pm R,\\
\int_{\Omega_R} W_2=q.\ea
\eeq
Indeed, according to Lemma \ref{lem-app}, the solutions of the homogeneous equation 
\beq\label{homo-1}
-\Delta w + \dv (b_k w)=0\text{ in } \Omega_R,  \quad - \p_1 w + b_{k, 1} w=0 \text{ for } x_1=\pm R
\eeq
are the functions $w=c p_{k,R}$ where $p_{k,R}>0, c\in\bbr$. Since the dual problem of \eqref{homo-1} is
\[
- \Delta q - b_k\cdot \na q=0\quad \text{in }  \Omega_R,\quad \p_1 q=0\text{ for } x_1=\pm R,
\]
and a simple computation gives
\[
\int_{\Omega_R} p_{k,R} |\nabla q|^2 dx =0,
\]
the solutions of the dual problem are the constants. Thus, the inhomogeneous term $-\dv \tilde B_{k,R}(x, W_1)$ of \eqref{Fred-eq} is orthogonal to the constants thanks to the cut-off function $\chi_R$. This ensures the existence of solutions of the first two lines of \eqref{Fred-eq}; these solutions are defined up to a function of the form $cp_{k,R}$, for $c\in \bbr$, and the third line of \eqref{Fred-eq} fixes the value $c$ and ensures uniqueness of solutions of \eqref{Fred-eq}.
Hence we can define the operator
$L_R:W_1\in L^2(\Omega_R)\mapsto W_2\in L^2(\Omega_R)$. Notice that in fact, the operator $L_R$ is continuous from $L^2(\Omega_R)$ to $H^1(\Omega_R)$, and therefore $L_R$ is compact for all $R>0$. Now, let $\lambda\in [0,1]$ be arbitrary, and let $W^\lambda$ be such that $ \lambda L_R(W^\lambda)=W^\lambda$. We first observe that  since $\tilde B_{k,R}(x,0)=0$, $(W^\lambda)_+:=W^\lambda\mathbf 1_{W^\lambda \geq 0 }$ satisfies
$$\ba
-\Delta (W^\lambda)_+ + \dv( b_k (W^\lambda)_+) + \lambda \dv \left(\mathbf 1_{W^\lambda>0}\tilde B_{k,R}(x, W^\lambda)\right)\leq 0\quad \text{in }\Omega_R,\\
- \p_1(W^\lambda)_+ + b_{k, 1} (W^\lambda)_+=0 \text{ for } x_1=\pm R.\ea
$$
Using once again an argument similar to the one developed form \eqref{HR}, we deduce that $W^\lambda$ keeps a constant sign on $\Omega_R$. 
Thus 
\beq\label{bdd-W}
\|W^\lambda\|_{L^1(\Omega_R)}= \int W^\lambda=q> 0.
\eeq

We derive an uniform $H^1$ bound on $W^\lambda$ in the following way: we have
$$
\int_{\Omega_R}|\na W^\lambda|^2 - \int_{\Omega_R} b_{k} W^\lambda \cdot \na W^\lambda = \lambda  \int_{\Omega_R}\tilde B_{k,R}(x, W^\lambda) \cdot \na W^\lambda.
$$
Using trace estimates together with the Gagliardo-Nirenberg interpolation as in the proof of Lemma \ref{lem:p}, we have that for any $\nu>0$ there exists $C_\nu>0$ such that
\begin{eqnarray*}
\int_{\Omega_R} b_{k} W^\lambda \cdot \na W^\lambda &=& -\frac{1}{2}\int_{\Omega_R} \dv(b_k) |W^\lambda|^2 + \frac{1}{2}\int_{\bbt^{N-1}}b_{k,1}( R, x')|W^\lambda (R, x')|^2\:dx'\\
&&\quad-\frac{1}{2}\int_{\bbt^{N-1}}b_{k,1}( R, x')|W^\lambda (R, x')|^2\:dx'\\
&\leq & C_\nu \|b_k\|_{W^{1,\infty}} \|W^\lambda\|_{L^1}^{2-\alpha} \|\na W^\lambda\|_2^\alpha + \nu \|\na W^\lambda\|_2^2
\end{eqnarray*}
for some $\alpha\in (0,2)$. On the other hand, setting
$$
\beta_{k,R}(x,r):=\int_0^r \tilde B_{k,R}(x,r')\:dr', \quad x\in \Omega, \ r\in \bbr,
$$
we have (notice that since $\p_v^2 A \in W^{1,\infty}_{loc}$, we also have $\tilde B_{k,R}\in W^{1,\infty}$)
$$
\tilde B_{k,R}(x, W^\lambda) \cdot \na W^\lambda=\dv\left( \beta_{k,R}(x,W^\lambda)\right) - (\dv_x \beta_{k,R})(x,W^\lambda).
$$
Since $\beta_{k,R}(\pm R, x', r)=0$ for all $x', r$,
\begin{eqnarray*}
 \int_{\Omega_R}\tilde B_{k,R}(x, W^\lambda) \cdot \na W^\lambda dx  &=& -\int_{\Omega_R }(\dv_x \beta_{k,R})(x,W^\lambda)dx \\
&=&-\int_{\Omega_R }\int_0^{W^\lambda}(\dv_x \tilde B_{k,R})(x,r')~dr' dx \\
&\le& \int_{\Omega_R }\int_0^{W^\lambda} C~ dr' dx\\
&\leq& C\|W^\lambda\|_{L^1(\Omega_R)}.
\end{eqnarray*}

Using Young's inequality together with the $L^1$ bound \eqref{bdd-W} on $W^\lambda$, we infer that there exists a constant $C$ independent of $\lambda$ and $R$, such that
$$
\|W^\lambda\|_{H^1(\Omega_R)}\leq C.
$$
Therefore, it follows from Schaefer's theorem that \eqref{eq:W-approx} has a solution $W_R\in H^1\cap L^1(\Omega_R)$. Moreover, using the estimates above for $\lambda=1$, we deduce that the family   $(W_R)_{R>0}$ is bounded in $H^1\cap L^1(\Omega_R)$ uniformly in $R$.

Furthermore, we claim that 
\be\label{in:W_R}
0\leq W_R\leq p_{k,R}\quad \forall R.
\ee
The positivity of $W_R$ has been proved above. As for the upper-bound, we notice that  by definition of $\tilde B_{k,R}$, $\tilde B_{k,R}(x, p_{k,R})\equiv 0$, and thus it follows from \eqref{p_kR} that $p_{k,R}$ is a solution of
$$\ba
-\Delta p_{k,R} + \dv(b_kp_{k,R}) +{ \dv \tilde B_{k,R}(x, p_{k,R})}=0\text{ in } \Omega_R,\\
- \p_1 p_{k,R}+ b_{k, 1} p_{k,R}=0 \text{ for } x_1=\pm R.\ea$$
Using the same argument as the one leading to the positivity of $W_R$, we deduce that $W_R-p_{k,R}$ keeps a constant sign over $\Omega_R$. By definition of $k$,
$$
{ q=\int_{\Omega_R} W_R \leq \int_{\Omega} p_k }  
=\int_{\Omega_R} p_{k,R}, 
$$
we deduce that $W_R-p_{k,R}\leq 0$. 

We can now pass to the limit in \eqref{eq:W-approx} as $R\to \infty$. According to the uniform $H^1\cap L^1$ bounds, we deduce that there exists $W\in H^1\cap L^1(\Omega)$ such that $W_R\rightharpoonup W$ in $H^1(\Omega)$, and $W_R\to W$ in $L^2_{loc}(\Omega)$ up to a subsequence. Since $p_{k,R}\to p_k$ in $L^1$ according to Lemma \ref{lem:p}, we deduce that $W$ is a solution of
$$
-\Delta W + \dv(b_kW) + \dv \tilde B_k(x, W)=0.
$$
Eventually, using inequality \eqref{in:W_R} together with the convergence in $L^1$ of the functions $p_{k,R}$, we deduce that $(W_R)_{R>0}$ is uniformly equi-integrable, and therefore compact in $L^1$. Hence, up to a further extraction of subsequences, $W_R\to W$ in $L^1$ and
$$
\int W=\lim_{R\to \infty}\int W_R=q.
$$
Thus the existence of solutions of \eqref{eq:W-2} is proved, which completes the proof of Proposition \ref{prop:shock-mass}.

\section{Stability of stationary shocks} \label{sec:stability}

The goal of this section is to prove Theorem \ref{thm:stability}. Throughout the section, we denote by $(S_t)_{t\geq 0}$ the semi-group associated with equation \eqref{main}. We recall (see for instance \cite{Serre}) that $S_t$ is well-defined in $L^1(\Omega) + L^\infty(\Omega)$, is order preserving and satisfies conservation and contraction principles in $L^1(\Omega)$: if $u,v \in L^1(\Omega) + L^\infty(\Omega)$ are such that $u-v\in L^1(\Omega)$, then $S_t u - S_t v \in L^1(\Omega)$ for all $t\geq 0$ and
$$
\int_{\Omega}(S_t u - S_t v)= \int_{\Omega}(u-v),\quad \| S_t u -S_t v\|_{L^1}\leq \|u-v\|_{L^1}\quad \forall t\geq 0.
$$

First of all, Corollary \ref{cor-zero} allows us to restrict the proof of Theorem \ref{thm:stability} to the case of zero-mass perturbation $u_0\in \bar U+L^1_0(\bbr\times\bbt^{N-1})$.\\

On the other hand, the following lemma allows us to replace the inequality \eqref{hyp:u0} by an inequality where the upper bound and lower bounds are standing shocks.
\begin{lemma}\label{lem-relaxed}
 Let $\bar{U}$ be a stationary shock wave connecting $v(\cdot, p_-)$ to $v(\cdot, p_+)$. Assume that $u\in \bar{U} + L^1_0( \bbr\times\bbt^{N-1})$ satisfies $v(x,p_+) \le u(x)\le v(x,p_-)$ for a.e. $x\in\bbr\times\bbt^{N-1}$.\\
Then, for any $\eps>0$, there exist a function $u^{\eps}\in \bar U+L^1_0(\bbr\times\bbt^{N-1})$ and standing shocks $U^{\eps}_{\pm}$ connecting $v(\cdot, p_-)$ to $v(\cdot, p_+)$ such that 
\[
\|u-u^{\eps}\|_{L^1(\bbr\times\bbt^{N-1})} \le \eps,\quad U^{\eps}_{+}\le u^{\eps}\le U^{\eps}_{-}.
\]
\end{lemma}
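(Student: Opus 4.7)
The plan is to truncate $u$ between two shocks drawn from the one-parameter family of Proposition \ref{prop:shock-mass}, and then to adjust both parameters via an intermediate-value argument so that zero mass is preserved.

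By Proposition \ref{prop:shock-mass} applied to $\bar U$, for every $\alpha\in\bbr$ there is a unique stationary shock $\bar U_\alpha$ (with the same end states $v(\cdot,p_\pm)$) satisfying $\int_\Omega(\bar U_\alpha-\bar U)=\alpha$, and $\bar U_0=\bar U$. Lemma \ref{lem-L1} forces $\bar U_\alpha-\bar U_\beta$ to keep a constant sign, and since its integral equals $\alpha-\beta$, the family is monotone non-decreasing in $\alpha$ and the map $\alpha\mapsto\bar U_\alpha$ is $1$-Lipschitz from $\bbr$ into $L^1(\Omega)$. Writing $c_0:=\int_\Omega(\bar U-\tau_1\bar U)>0$, uniqueness in Proposition \ref{prop:shock-mass} forces $\bar U_{Kc_0}=\tau_{-K}\bar U$ for every $K\in\bbz$; combining this with the monotonicity in $\alpha$, the periodicity of $v$, and \eqref{shock-b}, one obtains the pointwise a.e.\ limits $\bar U_\alpha(x)\to v(x,p_\pm)$ as $\alpha\to\pm\infty$.

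For $\alpha>0>\beta$ set
$$u^{\alpha,\beta}(x):=\max\bigl(\bar U_\beta(x),\min(u(x),\bar U_\alpha(x))\bigr),$$
so that $\bar U_\beta\le u^{\alpha,\beta}\le\bar U_\alpha$ pointwise. A three-region case analysis gives $u^{\alpha,\beta}-u=(\bar U_\beta-u)_+-(u-\bar U_\alpha)_+$. Introducing
$$\Phi_+(\alpha):=\int_\Omega(u-\bar U_\alpha)_+,\qquad \Phi_-(\beta):=\int_\Omega(\bar U_\beta-u)_+,$$
this rewrites as
$$\int_\Omega(u^{\alpha,\beta}-\bar U)=\Phi_-(\beta)-\Phi_+(\alpha),\qquad \|u^{\alpha,\beta}-u\|_{L^1(\Omega)}=\Phi_-(\beta)+\Phi_+(\alpha).$$
The maps $\Phi_\pm$ are $1$-Lipschitz, $\Phi_+$ is non-increasing and $\Phi_-$ non-decreasing, with $\Phi_+(0)=\Phi_-(0)=m_0:=\int(u-\bar U)_+=\int(\bar U-u)_+$ (the last equality from $\int(u-\bar U)=0$). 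For $\alpha\ge 0$ we have $\bar U_\alpha\ge\bar U$, hence $(u-\bar U_\alpha)_+\le(u-\bar U)_+\in L^1(\Omega)$; combined with the pointwise convergence $(u-\bar U_\alpha)_+\to(u-v(\cdot,p_-))_+=0$ and dominated convergence, this gives $\Phi_+(\alpha)\to 0$ as $\alpha\to+\infty$, and symmetrically $\Phi_-(\beta)\to 0$ as $\beta\to-\infty$.

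Given $\eps>0$: if $m_0=0$ then $u=\bar U$ and we may take $u^\eps:=u$, $U^\eps_\pm:=\bar U_{\pm 1}$. Otherwise, choose $\alpha^*>0$ with $c:=\Phi_+(\alpha^*)<\min(\eps/2,m_0)$; since $\Phi_-$ is continuous, non-decreasing, with $\Phi_-(-\infty)=0$ and $\Phi_-(0)=m_0>c$, the intermediate value theorem yields $\beta^*<0$ with $\Phi_-(\beta^*)=c$. Setting $u^\eps:=u^{\alpha^*,\beta^*}$, $U^\eps_-:=\bar U_{\alpha^*}$, and $U^\eps_+:=\bar U_{\beta^*}$ then simultaneously provides the sandwich $U^\eps_+\le u^\eps\le U^\eps_-$, the zero-mass identity $\int(u^\eps-\bar U)=\Phi_-(\beta^*)-\Phi_+(\alpha^*)=0$, and the bound $\|u-u^\eps\|_{L^1}=2c<\eps$. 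The main technical step is the asymptotic pointwise convergence $\bar U_\alpha\to v(\cdot,p_\pm)$, which I expect to obtain from the identification $\bar U_{Kc_0}=\tau_{-K}\bar U$ squeezing $\bar U_\alpha$ between two translates of $\bar U$ whose limits are computed via the periodicity of $v$.
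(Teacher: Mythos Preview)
Your argument is internally correct, but it rests on Proposition~\ref{prop:shock-mass}, whose hypotheses are strictly stronger than those of Lemma~\ref{lem-relaxed}: Proposition~\ref{prop:shock-mass} requires the Lax conditions \eqref{Lax-1}--\eqref{Lax-2} and the extra regularity $\partial_v A,\partial_v^2 A\in W^{1,\infty}_{loc}$. Lemma~\ref{lem-relaxed} is stated (and used in the proof of the first item of Theorem~\ref{thm:stability}) without these assumptions. The continuous family $(\bar U_\alpha)_{\alpha\in\bbr}$, and hence the intermediate-value step on $\Phi_-$, is therefore not available in general. Everything else---the monotonicity and $L^1$-isometry of $\alpha\mapsto\bar U_\alpha$, the identification $\bar U_{Kc_0}=\tau_{-K}\bar U$, the pointwise squeeze giving $\bar U_\alpha\to v(\cdot,p_\pm)$, the decomposition $u^{\alpha,\beta}-u=(\bar U_\beta-u)_+-(u-\bar U_\alpha)_+$, and the dominated-convergence computation of $\Phi_\pm$---is fine.

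The paper's route avoids Proposition~\ref{prop:shock-mass} entirely and uses only integer translates $\tau_k\bar U$, which are shocks by periodicity of the flux with no extra hypotheses. The idea (from \cite[Lemma~3.6]{Dalibard-indiana}) is to set $u^\eps=\bar U$ outside a large slab $[-A_\eps,A_\eps]\times\bbt^{N-1}$, and on that slab to perturb $u$ slightly so that it lies \emph{strictly} between $v(\cdot,p_+)$ and $v(\cdot,p_-)$ and so that the total mass is restored to zero; both adjustments cost at most $\eps$ in $L^1$ since $u-\bar U\in L^1$ and $v(\cdot,p_+)\le u\le v(\cdot,p_-)$ already. Because $u^\eps=\bar U$ outside the slab and is strictly between the end states on the compact slab, one can choose $k_\pm\in\bbz$ with $\tau_{k_+}\bar U\le u^\eps\le\tau_{k_-}\bar U$: outside the slab this is just $\tau_{k_+}\bar U\le\bar U\le\tau_{k_-}\bar U$ (monotonicity of translates), and on the slab it follows since $\tau_{k}\bar U\to v(\cdot,p_\pm)$ uniformly on compacts as $k\to\pm\infty$. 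Your approach is arguably cleaner when the Lax conditions are in force, but as a proof of Lemma~\ref{lem-relaxed} as stated you should either rework it using only the discrete family $(\tau_k\bar U)_{k\in\bbz}$ or add the missing hypotheses explicitly.
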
 
The case of $N=1$ above (i.e., $\bbr$ instead of $\bbr\times\bbt^{N-1}$) has been shown in \cite[Lemma 3.6]{Dalibard-indiana}, whose proof can be directly extended to the above lemma, because other variables $x'$ are in $\bbt^{N-1}$.  The idea is to take $u^\eps=\bar U$ outside of a compact set $[-A_\eps, A_\eps]\times \bbt^{N-1}$ and then to perturb slightly $u$ on the compact  set $[-A_\eps, A_\eps]\times \bbt^{N-1}$ in order to be strictly between the two end states. We leave the details of the proof to the reader since they are identical to \cite[Lemma 3.6]{Dalibard-indiana}.

Now, thanks to Lemma \ref{lem-relaxed} together with the $L^1$-contraction principle, it is enough to prove Theorem \ref{thm:stability} for the class of initial data $u_0\in \bar U+L^1_0(\bbr\times\bbt^{N-1})$ such that
\beq\label{ini-main}
U_+\le u_0 \le U_-,\quad\mbox{for some standing shocks}~ U_{\pm}.
\eeq
Indeed, assume that $\lim_{t\to \infty} \|S_t v_0-\bar U\|_{L^1(\bbr\times\bbt^{N-1})}=0$ for any $v_0\in \bar U+L^1_0(\bbr\times\bbt^{N-1})$ satisfying \eqref{ini-main}. By Lemma \ref{lem-relaxed}, for any $u_0\in \bar U+L^1_0(\bbr\times\bbt^{N-1})$ satisfying \eqref{hyp:u0}, and $\eps>0$, there exists a function $u^{\eps}_0\in \bar U+L^1_0(\bbr\times\bbt^{N-1})$ such that $\|u_0-u^{\eps}_0\|_{L^1(\bbr\times\bbt^{N-1})} \le \eps$ and \eqref{ini-main}. Then the $L^1$-contraction principle yields that for all $t\ge 0$,
\[
 \|S_t u_0-\bar U\|_{L^1(\Omega)} \le\|S_t u_0-S_t u_0^{\eps}\|_{L^1(\Omega)} +\|S_t u_0^{\eps}-\bar U\|_{L^1(\Omega)} \le \eps +\|S_t u_0^{\eps}-\bar U\|_{L^1(\Omega)}.
\]
Since $t\mapsto  \|S_t u_0-\bar U\|_{L^1}$ is non-increasing, and thus has a finite limit as $t\to\infty$,
\[
\lim_{t\to \infty} \|S_t u_0-\bar U\|_{L^1(\bbr\times\bbt^{N-1})}\le \eps,
\]
which implies that $\lim_{t\to \infty} \|S_t u_0-\bar U\|_{L^1(\bbr\times\bbt^{N-1})}=0$.\\

Therefore, there remains to prove Theorem \ref{thm:stability} for the initial data $u_0\in \bar U+L^1_0(\bbr\times\bbt^{N-1})$ satisfying \eqref{ini-main}. We follows the same arguments as \cite{Dalibard-indiana}, which is based on the dynamical system theory due to Osher and Ralston \cite{OR}. The strategy is to prove that the $\omega$-limit set of the trajectory $S_tu_0$ is reduced to $\{\bar U\}$ using the $L^1$-contraction principle. Thus, we need to first show that the $\omega$-limit set is non-empty.\\

$\rhd$\textit{ First step : Structure of the $\omega$-limit set.}

We begin by noticing that the comparison principle together with \eqref{ini-main} imply that for all $t\ge0$,
\[
U_+\le S_t u_0 \le U_-,
\] 
and thus, setting $w(t)= S_t u_0 - \bar U$,
\[
U_+-\bar U\le w(t) \le U_- -\bar U. 
\]
Since $U_+-\bar U$ and $U_- -\bar U$ are in $L^1\cap L^{\infty}(\bbr\times\bbt^{N-1})$ by Lemma \ref{lem-L1}, the family $(w(t))_{t\ge0}$ is equi-integrable in $L^1(\bbr\times\bbt^{N-1})$ and uniformly bounded in $L^{\infty}([0,\infty)\times\Omega)$. Moreover, since $w$ solves a linear parabolic equation of the type
\[
\partial_t w +\dv_x \Big( a(t,x) w\Big) -\Delta w=0,
\] 
where $a:=\int_0^1\partial_vA(x,\tau S_tu_0+ (1-\tau)\bar{U}) d\tau\in L^{\infty}([0,\infty)\times\Omega)$, it follows from \cite[Theorem 10.1]{LSU} that there exists $\alpha>0$ such that for all $t_0\ge1$ and $R>1$,
\[
\|w\|_{H^{\alpha/2,\alpha}((t_0,t_0+1)\times(-R,R)\times\bbt^{N-1})}<\infty.
\]
Thus, $(w(t))_{t\ge0}$ is also equi-continuous in $L^1$.\\
Therefore, it follows from the Riesz-Fr\'echet-Kolmogorov theorem that $(w(t))_{t\ge0}$ is relatively compact in $L^1$. Thus the $\omega$-limit set
\[
\mathcal{B}:=\Big\{ W\in \bar U + L^1_0(\Omega)~|~ \exists (t_n)_{n\in\NN},~t_n\to \infty,~ S_{t_n}u_0\to W~\mbox{in}~L^1(\Omega) \Big\},
\]
is non-empty. Notice that $\mathcal{B}\subset \bar U + L^1_0(\Omega)$ because of $u_0\in \bar U + L^1_0(\Omega)$ and the conservation of mass.\\
By the definition of $\omega$-limit set, $\mathcal{B}$ is forward and backward invariant by the semi-group $S_t$, i.e., $S_t\mathcal{B}=\mathcal{B}$ for all $t$. Moreover, thanks to parabolic regularity, all functions in $\mathcal{B}$ are smooth, for example $\mathcal{B}\subset H^1_{loc}(\Omega)$. As a consequence, for any $W\in \mathcal{B}$, it follows from \cite[Theorem 6.1]{LSU} that $S_tW\in L^2(0,T; H^2_{loc}(\Omega))\cap H^1(0,T; L^2_{loc}(\Omega))$.

We take advantage of LaSalle invariance principle \cite{L} with a suitable Lyapunov functional $F[u]:=\|u-\bar U\|_{L^1(\Omega)}$. Since $t\mapsto F[S_t W]$ is non-increasing by the $L^1$-contraction principle, $F$ takes a constant value on $\mathcal{B}$, which we denote by $C_0$. \\

$\rhd$\textit{ Second step :  $\mathcal{B}=\{\bar U\}$.}
We now prove $\mathcal{B}=\{\bar U\}$. For any $W_0\in \mathcal{B}$, we set $W(t)=S_tW_0$. Notice that $W(t)\in\Omega$ for all $t\ge 0$. Since $W(t)-\bar U$ satisfies 
\[
\partial_t (W-\bar U) + \dv_x \Big( A(x,W)-A(x,\bar U) \Big) -\Delta (W-\bar U)=0,
\]
we have
\beq\label{eq-abs}
\partial_t |W-\bar U| + \dv_x \Big( b(t,x) |W-\bar U| \Big) - \sgn(W-\bar U)\Delta (W-\bar U)=0,
\eeq
where $b(t,x)=\int_0^1\partial_vA(x,\tau W+ (1-\tau)\bar{U}) d\tau$.\\
In order to show that $\sgn(W-\bar U)\Delta (W-\bar U)=\Delta |W-\bar U|$, we use the following lemma.
\begin{lemma}\label{lem-B.1}
Let $f\in L^1\cap L^{\infty} (\bbr\times \bbt^{N-1})$ such that $\nabla f\in L^2(\bbr\times \bbt^{N-1})$ and $\Delta f\in L^1_{loc}(\bbr\times \bbt^{N-1})$. Assume that $f$ satisfies 
\beq\label{condition-1}
\lim_{R\to\infty} \int_{\Omega} \sgn(f) \Delta f\, \theta\Big(\frac{x_1}{R}\Big) dx =0,
\eeq
for all $\theta\in \mathcal{C}^{\infty}_0 (\bbr)$ such that $\theta\equiv 1$ in a neighborhood of the origin. Then
\[
\lim_{\delta\to0}\frac{1}{\delta} |\nabla f|^2  \mathbf 1_{|f|<\delta} =0 \quad \mbox{in}~\mathcal{D}'(\Omega),
\]
therefore,
\[
\sgn(f)\Delta f=\Delta |f| \quad \mbox{in}~\mathcal{D}'(\Omega).
\]
\end{lemma}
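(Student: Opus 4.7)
The strategy is to smooth out the sign function by a Lipschitz approximation, integrate by parts against a cutoff of the form $\theta(x_1/R)$, and pass successively to the limits $\delta\to 0$ and $R\to\infty$, invoking assumption \eqref{condition-1} at the last step. The non-negativity of the integrand $\delta^{-1}|\nabla f|^2 \mathbf{1}_{|f|<\delta}$ is essential, since it lets one promote the integrated statement against $\theta(x_1/R)$ to a distributional statement against arbitrary test functions.

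Fix $\delta>0$ and set $H_\delta(s) := \sgn(s)\,\min(|s|/\delta,1)$, which is Lipschitz, converges pointwise to $\sgn$, and satisfies $H_\delta'(s) = \delta^{-1}\mathbf{1}_{|s|<\delta}$ a.e. Since $f\in H^1_{\mathrm{loc}}$ (as $\nabla f\in L^2$), the chain rule for Sobolev functions gives $\nabla(H_\delta(f)) = \delta^{-1}\mathbf{1}_{|f|<\delta}\nabla f$. For any $\theta\in \mathcal{C}^\infty_c(\bbr)$ with $\theta\equiv 1$ near the origin, $\varphi := H_\delta(f)\theta(x_1/R)$ belongs to $H^1\cap L^\infty$ with compact $x_1$-support, and the distributional identity $\langle\Delta f,\varphi\rangle = -\langle\nabla f,\nabla\varphi\rangle$ (valid for such $\varphi$ by mollification using $\nabla f\in L^2$) yields, after rearranging,
\[
\int \frac{|\nabla f|^2}{\delta}\, \mathbf{1}_{|f|<\delta}\, \theta(x_1/R)\, dx \;=\; -\!\int H_\delta(f)\,\Delta f\,\theta(x_1/R)\, dx \;-\; \frac{1}{R}\!\int H_\delta(f)\,\partial_{x_1} f\,\theta'(x_1/R)\, dx.
\]

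Now send $\delta\to 0$ with $R$ fixed. Since $|H_\delta|\le 1$, $H_\delta(f)\to \sgn(f)$ pointwise, $\Delta f\in L^1_{\mathrm{loc}}$ and $\nabla f\in L^2$, dominated convergence replaces $H_\delta(f)$ by $\sgn(f)$ on the right-hand side. Using $\sgn(f)\partial_{x_1}f = \partial_{x_1}|f|$ (the Sobolev chain rule for $|f|$) and one further integration by parts in $x_1$, the second right-hand term becomes $R^{-2}\int |f|\,\theta''(x_1/R)\, dx = O(R^{-2}\|f\|_{L^1})$, which vanishes as $R\to\infty$. The first right-hand term vanishes as $R\to\infty$ precisely by hypothesis \eqref{condition-1}. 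Since the left-hand integrand is non-negative, any test function $\psi\in\mathcal{C}^\infty_c(\Omega)$ with $\psi\ge 0$ is dominated by $\|\psi\|_\infty\,\theta(x_1/R)$ for $R$ large enough, so that $\limsup_{\delta\to 0}\int \delta^{-1}|\nabla f|^2\mathbf{1}_{|f|<\delta}\,\psi\,dx \le 0$; signed $\psi$ reduce to the positive case by bounding $|\psi|$ by a fixed bump. This establishes the first claim.

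For the identity $\sgn(f)\Delta f = \Delta |f|$, apply the same integration-by-parts formula with $\theta(x_1/R)$ replaced by an arbitrary $\psi\in\mathcal{C}^\infty_c(\Omega)$: the first claim forces $\int H_\delta'(f)|\nabla f|^2\psi\,dx\to 0$, while $\int H_\delta(f)\nabla f\cdot\nabla\psi\,dx\to\int\sgn(f)\nabla f\cdot\nabla\psi\,dx = \int\nabla|f|\cdot\nabla\psi\,dx$ by dominated convergence and the Sobolev chain rule for $|f|$. This yields $\langle \sgn(f)\Delta f,\psi\rangle = -\int\nabla|f|\cdot\nabla\psi\,dx = \langle \Delta|f|,\psi\rangle$. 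The main technical hurdle is justifying the initial integration by parts under the weak regularity $\Delta f\in L^1_{\mathrm{loc}}$: one mollifies $\varphi$ so that the approximants converge to $\varphi$ in $H^1$ and pointwise a.e. while preserving the uniform $L^\infty$-bound and the compact $x_1$-support, which allows pairing the $L^\infty$-bounded sequence with $\Delta f\in L^1_{\mathrm{loc}}$. The remaining passages to the limit are routine dominated-convergence arguments.
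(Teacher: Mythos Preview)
Your proof is correct. Note that the paper does not actually prove this lemma: it states that the one-dimensional case was established in \cite[Lemma B.1]{Dalibard-indiana} and that the argument extends verbatim to $\bbr\times\bbt^{N-1}$. Your approach---Lipschitz regularization $H_\delta$ of $\sgn$, integration by parts against the cutoff $\theta(x_1/R)$, then successive limits $\delta\to0$ and $R\to\infty$ using the non-negativity of $\delta^{-1}|\nabla f|^2\mathbf 1_{|f|<\delta}$ to pass from the cutoff to arbitrary test functions---is the standard one and is almost certainly what the cited reference does; in any case it is complete as written. One small remark: when you dominate a non-negative $\psi\in\mathcal C^\infty_c(\Omega)$ by $\|\psi\|_\infty\,\theta(x_1/R)$, you are implicitly choosing $\theta\ge 0$ (e.g.\ $\theta\in\mathcal C^\infty_c$, $0\le\theta\le 1$, $\theta\equiv 1$ on $[-1,1]$), which is of course permitted by the hypothesis but worth saying explicitly.
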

The case of $N=1$ above has been shown in \cite[Lemma B.1]{Dalibard-indiana}, whose proof can be directly extended to the above lemma. Now, in order to show that the condition \eqref{condition-1} is satisfied in our case, we recall from the previous step that $F[W(t)]=\|W(t)-\bar U\|_{L^1(\Omega)}=C_0$ for all $t\ge 0$.
For any $t'>t\ge0$, since 
\begin{eqnarray*}
\int_t^{t'}\int_{\Omega} \partial_t |W-\bar U|\theta(\frac{x_1}{R})dx ds &\le & \int_{\Omega} |W(t')-\bar U| dx-\int_{|x_1|\le CR} |W(t)-\bar U| dx \\
&=& \int_{|x_1|\ge CR} |W(t)-\bar U| dx \to 0\quad\mbox{as}~R\to\infty,
\end{eqnarray*}
and 
\begin{eqnarray*}
\int_t^{t'}\int_{\Omega} \dv_x \Big( b(t,x) |W-\bar U| \Big) \theta(\frac{x_1}{R})dx ds 
&\le &\|A\|_{W^{1,\infty}}\|\theta^{\prime}\|_{\infty}\frac{1}{R} \int_t^{t'}\int_{\Omega}|W(t)-\bar U| dx ds \\
&=& \frac{C(t'-t)}{R} \to 0\quad\mbox{as}~R\to\infty,
\end{eqnarray*}
we have
\[
\int_t^{t'}\int_{\Omega}\sgn(W-\bar U)\Delta (W-\bar U)\theta(\frac{x_1}{R}) dxds \to 0\quad\mbox{as}~R\to\infty.
\]
Thus, a slightly modified version of Lemma \ref{lem-B.1} implies that
\[
\sgn(W-\bar U)\Delta (W-\bar U)=\Delta |W-\bar U|.
\]
Therefore, $|W-\bar U|$ is a non-negative solution of a parabolic equation of the type
\[
\partial_t |W-\bar U| + \dv_x \Big( b(t,x) |W-\bar U| \Big) - \Delta |W-\bar U|=0,
\]
where $b\in L^{\infty}([0,\infty)\times\Omega)$. Thanks to the Harnack inequality for the parabolic equations, for any compact set $K$ in $\Omega$, there exists $C_K$ such that
\beq\label{H-eq}
\sup_{x\in K} |(W_0-\bar U)(x)| \le C_K \inf_{x\in K} |(W(1)-\bar U)(x)|.
\eeq
Moreover, using the fact that $W(1)-\bar U\in L^1_0\cap H^1_{loc}(\Omega)$, there exists $x_1\in\Omega$ such that
\[
(W(1)-\bar U)(x_1) =0,
\]
which implies together with \eqref{H-eq} that $W_0\equiv V$. Hence we have $\mathcal{B}=\{\bar U\}$, and thus complete the proof of Theorem \ref{thm:stability}.

\begin{appendix}
\setcounter{equation}{0}
\section{use of the Krein-Rutman theorem to prove the positivity of solutions of some elliptic equations}

In this Appendix, we prove the following result, which has been used in several instances in the paper:

\begin{lem}\label{lem-app}
Let $R>0$ be arbitrary, and let $b\in L^\infty(\Omega_R)$. Consider the equation
\be\label{w-KR}
\ba
-\Delta w + \dv (bw)=0\quad \text{in } \Omega_R,\\
-\p_1 w + b_1 w=0\text{ for } x_1=\pm R.
\ea
\ee
Then the vector space of solutions of equation \eqref{w-KR} is $\bbr w_1$, where $w_1\in H^1(\Omega_R)\cap\mathcal C (\bar \Omega_R)$ is a strictly positive solution of \eqref{w-KR} such that $\int_{\Omega_R}w_1=1$.

\end{lem}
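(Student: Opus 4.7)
The plan is to construct $w_1$ as the Perron eigenvector of a resolvent operator associated with \eqref{w-KR} and then to invoke the strong form of the Krein--Rutman theorem. Fix $\lambda>0$ large enough --- say, $\lambda > \tfrac12\|b\|_\infty^2$ --- so that the bilinear form
$$a(u,\varphi) := \int_{\Omega_R}\nabla u\cdot\nabla\varphi \;-\; \int_{\Omega_R} b\,u\cdot\nabla\varphi \;+\; \lambda\int_{\Omega_R} u\,\varphi$$
is coercive on $H^1(\Omega_R)$, as follows from Young's inequality. For any $f\in C(\bar\Omega_R)$, define $T_\lambda f\in H^1(\Omega_R)$ as the unique solution given by Lax--Milgram of $a(T_\lambda f,\varphi)=\int_{\Omega_R} f\varphi$ for every $\varphi\in H^1(\Omega_R)$. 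An integration by parts, together with the arbitrariness of the boundary trace of $\varphi$, shows that $u=T_\lambda f$ is the weak solution of
$$-\Delta u+\dv(bu)+\lambda u = f \text{ in } \Omega_R,\qquad -\p_1 u+b_1 u=0 \text{ at } x_1=\pm R.$$
Classical $L^p$ elliptic regularity combined with Sobolev embeddings put $u$ in $C(\bar\Omega_R)$, and Rellich--Kondrachov makes $T_\lambda:C(\bar\Omega_R)\to C(\bar\Omega_R)$ compact.

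The second step is to prove that $T_\lambda$ is strongly positive with respect to the cone $\mathcal{K}$ of non-negative elements of $C(\bar\Omega_R)$. A standard test-function argument --- substituting $\min(u,0)$ in the weak formulation and absorbing the cross term via the coercivity of $a$ --- yields the weak maximum principle $f\geq 0 \Rightarrow T_\lambda f\geq 0$. When furthermore $f\not\equiv 0$, interior strict positivity of $u=T_\lambda f$ comes from Harnack / the strong maximum principle, as used elsewhere in the paper. Positivity on the lateral boundaries $\{x_1=\pm R\}$ is secured by Hopf's boundary point lemma: were $u$ to vanish at some $x_0\in\{x_1=\pm R\}$, Hopf would force $\p_\nu u(x_0)<0$, contradicting the Robin condition $-\p_1 u+b_1 u=0$. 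Hence $T_\lambda(\mathcal{K}\setminus\{0\})$ lies in the topological interior of $\mathcal{K}$ in $C(\bar\Omega_R)$.

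The strong form of the Krein--Rutman theorem now applies: $r:=\rho(T_\lambda)>0$ is an algebraically simple eigenvalue of $T_\lambda$ with a strictly positive eigenfunction $w_1\in H^1(\Omega_R)\cap C(\bar\Omega_R)$, unique up to a positive multiple, and every other eigenvalue $\mu$ satisfies $|\mu|<r$. To finish, I identify $r$ with $1/\lambda$: integrating the relation $-\Delta w_1+\dv(bw_1)+\lambda w_1=r^{-1}w_1$ over $\Omega_R$, the divergence theorem together with the boundary condition annihilates the two boundary contributions, leaving $\lambda\int_{\Omega_R} w_1 = r^{-1}\int_{\Omega_R} w_1$. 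Since $w_1>0$, this forces $r=1/\lambda$, so $w_1$ solves \eqref{w-KR}; a rescaling then enforces $\int_{\Omega_R} w_1=1$. Conversely, any solution $w$ of \eqref{w-KR} is an eigenfunction of $T_\lambda$ for the eigenvalue $1/\lambda=r$, hence by the algebraic simplicity of $r$ one has $w\in\bbr w_1$.

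The main technical obstacle will be establishing strong positivity \emph{up to the boundary}: because the boundary condition is of Robin type rather than Dirichlet, the interior strong maximum principle alone is not enough to put $T_\lambda f$ into the topological interior of the positive cone of $C(\bar\Omega_R)$, and one must invoke a Hopf-type argument compatible with the condition $-\p_1 u+b_1 u=0$. Everything else --- Lax--Milgram for well-posedness, compactness via Rellich, and the identification $r=1/\lambda$ via the compatibility $\int_{\Omega_R}(-\Delta w_1+\dv(bw_1))=0$ (which reflects the fact that constants span the kernel of the formal adjoint of $L$ with Neumann conditions) --- is standard once this positivity is in hand.
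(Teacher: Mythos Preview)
Your proof is correct and follows essentially the same Krein--Rutman strategy as the paper: construct a compact resolvent operator, establish strong positivity via the strong maximum principle and Hopf's lemma, and read off the simple positive eigenfunction. The one genuine difference is that you apply Krein--Rutman to the \emph{primal} resolvent $T_\lambda$ (solving $-\Delta u+\dv(bu)+\lambda u=f$ with the Robin condition) and recover the eigenvalue $r=1/\lambda$ by integrating over $\Omega_R$, whereas the paper applies it to the \emph{adjoint} resolvent $F$ (solving $-\Delta v-b\cdot\nabla v+\alpha v=\alpha u$ with Neumann condition), for which the Perron eigenfunction is explicitly the constant $1$ and the spectral radius is therefore $1$ without computation. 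The paper's choice has a small technical advantage: the adjoint operator is in non-divergence form, so the classical strong maximum principle and Hopf lemma apply directly with merely $b\in L^\infty$; your route requires the divergence-form versions of these results, which are available but slightly less standard to cite. Your closing remark about constants spanning the kernel of the formal adjoint is exactly the observation the paper exploits from the outset.
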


\begin{proof}
The dual of problem \eqref{w-KR} is
$$
\ba
- \Delta q - b\cdot \na q=0\quad \text{in } (-R,R)\times \bbt^{N-1},\\
\p_1 q=0\text{ at } x_1=\pm R,
\ea
$$
of which the constant function equal to one is a strictly positive solution.

Let us introduce the operator $F : u\in L^2(\Omega_R)\mapsto v\in L^2(\Omega_R)$
where $v=F(u)$ is the unique solution of the equation
$$
-\Delta v -b\cdot \nabla v + \alpha v=\alpha u\ \text{in }\Omega_R,\quad \p_1 v=0\text{ at } x_1=\pm R,
$$
and $\alpha$ is a positive constant chosen so that the bilinear
form associated to $F$ is coercive (e.g.
$\alpha=\frac{||b||_{\infty}^2}{2} + \frac{1}{2}$). With that
choice of $\alpha$, $F$ is a strictly positive operator.

Next, using  regularity results for linear
elliptic equations, we show that $F$
maps $L^q(\Omega_R)$ into $W^{2,q}(\Omega_R)$ for all $q\geq 2$.
Hence, the restriction of $F$ to $\mathcal
C(\bar \Omega_R)$, still denoted by $F$, is a compact
operator from $\mathcal
C(\bar \Omega_R)$ into itself.
The last step consists in using the strong form of the maximum principle together with Hopf's Lemma: if $u\in
\mathcal
C(\bar \Omega_R)$, $u\geq 0$, $u\neq 0$ and
$v=F(u)$, then $v(y)>0$ for all $y\in \bar \Omega_R$.

Hence, $F:\mathcal
C(\bar \Omega_R)\to\mathcal
C(\bar \Omega_R)$ is a strongly positive operator.
We conclude by using the strong form of the Krein-Rutman theorem
(see \cite{DL,KR}): since $F(\bar{1})=\bar{1}$, the
spectral radius of $F$ is equal to 1 and $1$ is a simple
eigenvalue of $F^*$, the adjoint of $F$, with a positive
eigenvector. We infer that \eqref{w-KR} has a unique non-negative solution $w_1$ normalized in  $L^1$. 

\end{proof}

\end{appendix}

\bibliographystyle{amsplain}
\bibliography{bibliografie}

\end{document}